\documentclass[11pt, leqno]{article}
\usepackage{amsmath, amsfonts, amsthm}
\setlength{\topmargin}{-0.2in}
\setlength{\textheight}{8.7in}
 \linespread{1.0}
 \numberwithin{equation}{section}
\newtheorem{theorem}{Theorem}[section]

\begin{document}

\author{Ajai Choudhry}
\title{Matrix Morphology and  Composition \\of Higher Degree Forms\\ with Applications to  Diophantine Equations}
\date{}
\maketitle

\begin{abstract} 
In this paper we use matrices, whose entries satisfy certain linear conditions, to obtain composition identities $f(x_i)f(y_i)=f(z_i)$, where $f(x_i)$ is an irreducible form, with integer coefficients, of degree $n$ in $n$ variables ($n$ being $3,\,4,\,6$ or $8$), and $x_i,\,y_i,\;i=1,\,2,\,\ldots,\,n$, are independent variables while  the values of $z_i,\;i=1,\,2,\,\ldots,\,n$, are given by bilinear forms in the variables $x_i,\,y_i$.  When $n=2,\,4$ or $8$, we also obtain   composition identities $f(x_i)f(y_i)f(z_i)=f(w_i)$ where, as before, $f(x_i)$ is an irreducible form, with integer coefficients, of degree $n$ in $n$ variables while  
$x_i,\,y_i,z_i,\;i=1,\,2,\,\ldots,\,n$, are independent variables and the values of $w_i,\;i=1,\,2,\,\ldots,\,n$, are given by trilinear forms in the variables $x_i,\,y_i,\,z_i$,  and such that the identities cannot be derived from any identities of the type  $f(x_i)f(y_i)=f(z_i)$. Further, we  describe a method of obtaining  both these types of  composition identities for forms of higher degrees. The composition identities given in this paper have not been obtained earlier. We also  obtain infinitely many solutions in positive integers of certain quartic and octic diophantine equations $f(x_1,\,\ldots,\,x_n)=1$ where $f(x_1,\,\ldots,\,x_n)$ is a form that admits a composition identity and $n=4$ or $8$.
\end{abstract}

Mathematics Subject Classification 2020: 11E76, 11E16, 11C20, 11D25, 11D41

Keywords: composition of forms; higher degree forms; three-fold composition of forms; matrices with a linear structure; higher degree diophantine equations.
 
\section{Introduction}
A form $f(x_1,\,x_2,\,\ldots,\,x_n)$   in $n$ variables $x_i,\;i=1,\,2,\,\ldots,\,n$,  is said to be a form admitting composition if there exists an identity,
\begin{equation}
 f(x_1,\,x_2,\,\ldots,\,x_n)f(y_1,\,y_2,\,\ldots,\,y_n)=f(z_1,\,z_2,\,\ldots,\,z_n),\label{complaw}
\end{equation}
where  the variables $z_i,\;i=1,\,2,\,\ldots,\,n$, are given by bilinear forms in the variables $x_i,\,y_i, \;i=1,\,2,\,\ldots,\,n$, that is,
\begin{equation}
z_i=\sum_{j=1}^n\sum_{k=1}^n\lambda_{ijk}x_jy_k, \quad i=1,\,2,\,\ldots,\,n, \label{valz}
\end{equation}
where $\lambda_{ijk}$ are certain constants. When an identity \eqref{complaw} exists, we will say that $f(x_1,\,x_2,\,\ldots,\,x_n)$ is a composable form. 

The subject of higher degree forms  that admit composition has been studied by several authors \cite{Di, Ha, MD, Pu, Sch1, Sch2}. Dickson \cite[pp.~222, 224]{Di} has given general theorems describing all high degree  ternary and quaternary forms admitting composition. While these theorems yield  higher degree composable forms with complex coefficients, they are of little help in finding higher degree forms that admit composition and have only  integer coefficients.
It seems that the existing literature contains only two explicit nontrivial examples of high degree composable forms with integer coefficients, namely the determinant of an $n \times n $ matrix yields a composable form of degree $n$ in $n^2$ variables,  and the norm of an algebraic integer yields a composable form of degree $n$ in $n$ variables. 

In this paper we will  study forms admitting composition with a view to obtaining infinitely many integer solutions of certain higher degree diophantine equations, and accordingly we will  consider only those  forms which have  integer coefficients and such that the constants $\lambda_{ijk}$ in the relations \eqref{valz} are also all  integers. Further, when we refer to a form being irreducible, we mean irreducibility over $\mathbb{Q}$. 

We will use matrices, whose entries satisfy certain linear conditions, to obtain composition identities \eqref{complaw} when  $f(x_1,\,x_2,\,\ldots,\,x_n)$ is an irreducible form of degree $n$ in $n$ variables and $n=3,\,4,\,6$ or $8$. The forms obtained in this paper are not the norms of algebraic integers, and the  composition identities satisfied by these forms have not been obtained earlier. 

We also obtain  forms  that satisfy a law of composition that is a variation of the usual  composition law defined by \eqref{complaw}. We say that a form $f(x_1,\,x_2,\,\ldots,\,x_n)$ admits  three-fold composition if   there exists an identity
\begin{multline}
f(x_1,\,x_2,\,\ldots,\,x_n)f(y_1,\,y_2,\,\ldots,\,y_n)f(z_1,\,z_2,\,\ldots,\,z_n)\\
= f(w_1,\,w_2,\,\ldots,\,w_n),\quad \quad \quad \label{f3compform}
\end{multline}
where the values of $w_i, \;i=1,\,2,\,\ldots,\,n$, are given by  trilinear forms in the variables $x_i,\,y_i,\,z_i, \;i=1,\,2,\,\ldots,\,n$, and further, the identity \eqref{f3compform} cannot be derived from an identity of type \eqref{complaw}. 

We will show that every  binary quadratic form admits three-fold composition. We also  obtain examples of quaternary quartic  forms  and octonary octic forms that admit three-fold composition. It seems that  three-fold composition of forms has not been considered in the existing literature and all these results are new. 

We have used the composition identities \eqref{complaw} and \eqref{f3compform} to solve certain quartic and higher degree  diophantine equations,
\begin{equation}
f(x_1,\,x_2,\,\ldots,\,x_n)=1.
\end{equation}
We give several examples of such equations for which we obtain, by an iterative process, infinitely many solutions in positive integers.

In Section~\ref{mm} we discuss how matrices, with a certain structure, may be used to obtain forms that satisfy composition identities. In Section~\ref{highdegfms} we construct examples of cubic and higher degree forms admitting composition and we also solve certain diophantine equations related to these forms. In Section~\ref{threefold} we construct forms  admitting three-fold composition and consider diophantine equations related to such forms. We conclude the paper with certain remarks and open problems regarding matrix morphology and composition of forms as well as related higher degree diophantine equations.

\section{Matrix morphology and  composition of forms}\label{mm}
If $A=[a_{ij}]$ and $B=[b_{ij}]$ are two arbitrary $n \times n$ matrices in independent  variables $x_{ij},\,y_{ij},\; i=1,\,\ldots,\,n,\;j=1,\,\ldots,\,n$,  and $AB=C$, then, as already observed in the Introduction,  the identity
\begin{equation}
{\rm det}(A) \times {\rm det}(B) ={\rm det}(C), \label{detidentgen}
\end{equation}
immediately yields  the  composable form ${\rm det}(A)$ of degree $n$ in $n^2$ variables. The form ${\rm det}(A)$ is, however,  of little interest as the number of variables is too large compared to the degree of the form. We will show that if the entries of the matrices $A$ and $B$ satisfy certain linear conditions, the identity \eqref{detidentgen} can be used to obtain composable forms of degree $n$ in just $n$ variables for several values of $n \geq 3$. 

\subsection{Matrices with a linear structure}\label{mmls}
 If $A=[a_{ij}]$ is an $n \times  n$  square matrix whose entries satisfy  $k$  independent linear conditions,
\begin{equation}
\sum_{i=1}^n \sum_{j=1}^n \lambda_{hij}a_{ij}=0,\quad h=1,\,2,\,\ldots,\,k, \label{linset}
\end{equation}
where $\lambda_{hij}$ are constants and  $k \; < n^2-1$,  we will say that the matrix $A$ has a linear structure defined by Eqs.~\eqref{linset}. 
It is evident that the entries of any scalar multiple of the matrix $A$ also satisfy similar  linear conditions. If $B=[b_{ij}]$ is any other  $n \times  n$ square matrix with the same linear structure as that of the matrix $A$, that is,   the entries $b_{ij}$ of the matrix $B$   satisfy the linear conditions,
\begin{equation}
\sum_{i=1}^n \sum_{j=1}^n \lambda_{hij}b_{ij}=0,\quad h=1,\,2,\,\ldots,\,k, \label{linsetB}
\end{equation} 
then it is readily seen that the entries of  the matrix $A+B$  also satisfy similar linear conditions. Thus, the linear structure is preserved under the operations of  scalar multiplication and addition  of matrices.

If there exist two matrices $A$ and $B$ with the same linear structure, defined by the relations \eqref{linset} and \eqref{linsetB}, and the entries of the matrix product $AB$, denoted by  the matrix $C=[c_{ij}]$, also satisfy similar linear conditions, that is, 
\begin{equation}
\sum_{i=1}^n \sum_{j=1}^n \lambda_{hij}c_{ij}=0,\quad h=1,\,2,\,\ldots,\,k, \label{linsetC}
\end{equation}
we say that  the linear structure of the matrix  $A$ is preserved under multiplication. 

The existing literature contains a few  examples of such  matrices,  for instance, it has been noted that the linear structure of circulant matrices,  semi-magic matrices and centrosymmetric matrices is preserved under multiplication (\cite[Theorem 3.2.4, p. 74]{Da}, \cite{We}, \cite{We2}). It seems, however,  that till now the morphology of matrices has not been used to obtain explicit composition identities for cubic and higher degree forms. 

We note that if $A=[a_{ij}]$ is a matrix with a linear structure defined by the conditions \eqref{linset}, then the entries $a_{ij}$ of the matrix $A$ may be written in terms of linear forms in $n^2-k$ independent variables. Conversely, it is readily seen that if the entries $a_{ij}$ of an $n \times n$  matrix are given by linear forms in  $h$ independent  variables, then the entries $a_{ij}$ satisfy $n^2-h$ linear conditions. We may thus specify the linear structure of a matrix just  by giving the entries of the matrix as linear forms in a certain number of variables. 

When the entries of a matrix  $A(x_1,\,x_2,\,\ldots,\,x_h)$ are expressed as linear  forms in the independent variables $x_1,\,x_2,\,\ldots,\,x_h$, we may replace the variables $x_1,\,x_2,\,\ldots,\,x_h$ by a new set of independent variables $y_1,\,y_2,\,\ldots,\,y_h$ and immediately obtain a second matrix $A(y_1,\,y_2,\,\ldots,\,y_h)$ with the same linear structure. If the linear structure of the matrix $A(x_1,\,x_2,\,\ldots,\,x_h)$ is preserved under multiplication, we may write,
\begin{equation}
A(x_1,\,x_2,\,\ldots,\,x_h) \times A(y_1,\,y_2,\,\ldots,\,y_h)=A(z_1,\,z_2,\,\ldots,\,z_h), \label{Axyz}
\end{equation}
where the values of $z_i,\; i=1,\,2,\,\ldots,\,h$,   given by bilinear forms in the variables $x_i,\, y_i, \;  i=1,\,2,\,\ldots,\,h$.

If we write $f(x_1,\,x_2,\,\ldots,\,x_h)={\rm det}\left(A(x_1,\,x_2,\,\ldots,\,x_h)\right)$, then from the relation \eqref{Axyz}, we immediately obtain the composition identity,
\begin{equation}
f(x_1,\,x_2,\,\ldots,\,x_h)f(y_1,\,y_2,\,\ldots,\,y_h)=f(z_1,\,z_2,\,\ldots,\,z_h), \label{fxyz}
\end{equation}
with  the values of $z_i,\; i=1,\,2,\,\ldots,\,h$,  being given by bilinear forms in the variables $x_i,\, y_i, \;  i=1,\,2,\,\ldots,\,h$.

As a simple illustrative example, if we define the 
matrix $A(x_1,\,x_2)$ by
\begin{equation}
A(x_1,\,x_2)=\begin{bmatrix} x_1 & x_2 \\ -qx_2 & x_1+px_2\end{bmatrix}, \label{22ex2}
\end{equation}
where $p, q $ are   arbitrary integers, it is  readily verified that
\begin{equation}
A(x_1,\,x_2) \times A(y_1,\,y_2)=A(z_1,\,z_2), \label{identA}
\end{equation}
where
\begin{equation}
z_1 =x_1y_1 -qx_2y_2, \quad z_2 =x_1y_2 + x_2y_1+ px_2y_2. \label{valzex2}
\end{equation}
Thus the linear structure of the matrix $A$ defined by \eqref{22ex2} is  preserved under multiplication, and we  get the well-known composition of forms identity, 
\begin{equation}
(x_1^2+px_1x_2+qx_2^2)(y_1^2+py_1y_2+qy_2^2) = z_1^2+pz_1z_2+qz_2^2, \label{compqdfm}
\end{equation}
where the values of $z_i,\;i=1,\,2$, are given by \eqref{valzex2}.

We also note that if $M$ is the companion matrix of the   polynomial $x^n+a_1x^{n-1}+a_2x^{n-2}+\cdots+a_{n-1}x+a_n$, and $I$ is the $n \times n $ identity matrix, the linear structure of the matrix $x_1I+x_2M+x_3M^2+\cdots +x_nM^{n-1}$ is preserved under multiplication, and we thus get the composition formulae for forms that are norms of  algebraic integers.

 In the sections that follow  we will construct new examples of $n \times n $ matrices in $n$ variables such that the linear structure of the matrices is preserved under multiplication, and we thereby get new results on composition of forms of degrees 3, 4, 6 and 8. 

\subsection{ A $3 \times 3 $ matrix with a linear structure}\label{mm33}

We now  define a $3 \times 3$ matrix $A(x_1,\,x_2,\,x_3)$, whose entries are linear forms in 3 independent variables $x_1,\,x_2,\,x_3$, as follows: 
\begin{equation}
		A(x_1,\,x_2,\,x_3)=\begin{bmatrix} x_{11}& x_{12}& x_{13} \\ x_{21} & x_{22} & x_{23}\\ 
		x_{31}  & x_{32} & x_{33}  \end{bmatrix}, \label{cubicmatrix}\\	
 \end{equation}
where 
\begin{equation}
\begin{aligned}
x_{11}&=x_1, \quad x_{12}=x_2,\quad x_{13}=x_3,\\
x_{21}&=- \lambda_3( \lambda_1- \lambda_2- \lambda_3+ \lambda_5)x_2- \lambda_3( \lambda_2- \lambda_4)x_3 ,\\
x_{22}&=x_1+ \lambda_1x_2+ \lambda_2x_3, \\
x_{23} & = \lambda_3x_2+ \lambda_3x_3, \\
x_{31}&=- \lambda_3( \lambda_2- \lambda_4)x_2+(- \lambda_1 \lambda_4+ \lambda_2^2- \lambda_2 \lambda_5+ \lambda_3 \lambda_4)x_3,\\
x_{32}&=\lambda_2x_2+ \lambda_4x_3, \\
x_{33} &= x_1+ \lambda_3x_2+ \lambda_5x_3, \label{xijcubicmatrix}
\end{aligned}
\end{equation}
and $\lambda_i,\;i=1,\,2\,\ldots,\,5$, are arbitrary integers.
It is readily verified by direct computation that
\begin{equation}
A(x_1,\,x_2,\,x_3) \times A(y_1,\,y_2,\,y_3)=A(z_1,\,z_2,\,z_3), \label{mm33ident}
\end{equation}
where the values of $z_i,\;i=1,\,2,\,3$, are given by bilinear forms in the variables $x_i,\,y_i$ as follows:
\begin{equation}
\begin{aligned}
z_1&=  x_1  y_1- \lambda_3 ( \lambda_1 - \lambda_2- \lambda_3+ \lambda_5)  x_2  y_2- \lambda_3 ( \lambda_2- \lambda_4)  x_2  y_3 \\
& \quad \quad - \lambda_3 ( \lambda_2- \lambda_4)  x_3  y_2+(- \lambda_1   \lambda_4+ \lambda_2^2- \lambda_2  \lambda_5+ \lambda_3  \lambda_4)  x_3  y_3,\\
 z_2& =  x_1  y_2+ x_2  y_1+ \lambda_1   x_2  y_2+ \lambda_2  x_2  y_3+ \lambda_2  x_3  y_2+ \lambda_4  x_3  y_3, \\
z_3& =  x_1  y_3+ \lambda_3  x_2  y_2+ \lambda_3  x_2  y_3+ x_3  y_1+ \lambda_3  x_3  y_2+ \lambda_5  x_3  y_3.
\end{aligned}
\label{valzmm33}
\end{equation}

Thus the matrix $A$ has a linear structure that is preserved under multiplication.

\subsection{Higher order matrices with a linear structure}\label{mmhom}
We will now show that given two square matrices of orders $m$ and $n$ such that the individual linear structures of the two matrices are preserved under multiplication, we can  construct  a square  matrix of order $mn$  with a linear structure that is preserved under multiplication. 

Let $U=[u_{ij}]$ be an $n \times n$ matrix with a linear structure that is preserved under multiplication. As noted above, the entries $u_{ij}$  of the matrix $U$ may be written as linear forms in $h$ independent variables where $h$ is a positive integer  $< n^2$, that is, we may write,
\begin{equation}
u_{ij}=\sum_{r=1}^h \lambda_{ijr}a_r, \quad i=1,\,2,\ldots,\,n,\;j=1,\,2,\ldots,\,n, \label{valuij}
\end{equation}
where $\lambda_{ijr}$ are integers while $a_r,\;r=1,\,2,\ldots,\,h$, are arbitrary  variables. 

In the entries of the matrix $U$, we now replace the variables $a_r,\;r=1,\,2,\ldots,\,h$, by a set of new  variables $b_r,\;r=1,\,2,\ldots,\,h$, and thus construct a new matrix $V=[v_{ij}]$ which has the same linear structure as the matrix $U$ and whose entries $v_{ij}$ are given by
\begin{equation}
v_{ij}=\sum_{r=1}^h \lambda_{ijr}b_r, \quad i=1,\,2,\ldots,\,n,\;j=1,\,2,\ldots,\,n. \label{valvij}
\end{equation}

Since, by assumption, the linear structure of the matrix $U$ is preserved under multiplication, if we write  $UV=W$, the entries of the matrix $W=[w_{ij}]$ may be written as
 \begin{equation}
w_{ij}=\sum_{r=1}^h \lambda_{ijr}c_r, \quad i=1,\,2,\ldots,\,n,\;j=1,\,2,\ldots,\,n. \label{valwij}
\end{equation}
where the values of the variables $c_r,\;r=1,\,2,\ldots,\,h$, are given by bilinear forms in the variables $a_1,\,a_2,\ldots,\,a_h$ and $b_1,\,b_2,\ldots,\,b_h$, that is, 
\begin{equation}
c_r=\sum_{s=1}^h \sum_{t=1}^h \mu_{rst}a_sb_t, \quad r=1,\,2,\ldots,\,h. \label{valw}
\end{equation}

We have assumed in the beginning that we are also given a second square matrix of order $m$ whose linear structure is preserved under multiplication, and hence its entries are expressible as linear forms in a certain number of variables. By successively replacing these variables by new sets of variables, we  can readily generate $2h$ square matrices  $A_i,\;i=1,\,2,\,\ldots,\,h$, and $B_i,\;i=1,\,2,\,\ldots,\,h$, of order $m$ such that all the matrices $A_i,\;B_i$ have  an identical linear structure  that is preserved under multiplication.

We will now use the matrices $U,\; V$ and the $2h$ matrices  $A_i,\;B_i$ to construct two new matrices $P$ and $Q$ whose entries will be written as certain matrices. We will construct the matrix $P$ by replacing the $h$ variables $a_i$ in the entries $u_{ij}$ of the matrix $U$ by the $h$ matrices $A_i$ respectively, and we similarly construct the matrix $Q$ by replacing the $h$ variables $b_i$  in the entries $v_{ij}$ of the matrix $V$ by the $h$ matrices $B_i$. We thus get,
\begin{equation}
P=\begin{bmatrix} P_{11} & P_{12} & \ldots & P_{1n} \\ P_{21} & P_{22} & \ldots & P_{2n} \\ \vdots \\  P_{n1} & P_{n2} & \ldots & P_{nn}
\end{bmatrix}, \quad 
Q=\begin{bmatrix} Q_{11} & Q_{12} & \ldots & Q_{1n} \\ Q_{21} & Q_{22} & \ldots & Q_{2n} \\ \vdots \\  Q_{n1} & Q_{n2} & \ldots & Q_{nn}
\end{bmatrix},
\end{equation}
where $P_{ij}$ and $Q_{ij}$ are matrices given by
\begin{align}
P_{ij}&=\sum_{r=1}^h \lambda_{ijr}A_r, \quad i=1,\,2,\ldots,\,n,\;j=1,\,2,\ldots,\,n, \label{valpij}\\
Q_{ij}&=\sum_{r=1}^h \lambda_{ijr}B_r, \quad i=1,\,2,\ldots,\,n,\;j=1,\,2,\ldots,\,n. \label{valqij}
\end{align}
It is clear from the manner of construction that all the matrices $P_{ij}$ and $Q_{ij}$ have the same linear structure as the matrices $A_i$ and $B_i$ and further, the linear structure of the matrix  $P$ is the same as that of the matrix $Q$.  

We will now show that the linear structure of the matrices $P$ and $Q$ is preserved under multiplication. The matrices $P$ and $Q$ are naturally conformally partitioned, and if we write  $PQ=R$, in view of the relations \eqref{valuij}, \eqref{valvij}, \eqref{valwij}, \eqref{valpij} and \eqref{valqij}, we may write the matrix $R$ as
\begin{equation}
R=\begin{bmatrix} R_{11} & R_{12} & \ldots & R_{1n} \\ R_{21} & R_{22} & \ldots & R_{2n} \\ \vdots \\  R_{n1} & R_{n2} & \ldots & R_{nn}
\end{bmatrix},
\end{equation}
where $R_{ij}$ is the matrix given by
\begin{equation}
R_{ij}=\sum_{r=1}^h \lambda_{ijr}C_r, \quad i=1,\,2,\ldots,\,n,\;j=1,\,2,\ldots,\,n. \label{valrij}
\end{equation}
and  the values of the matrices $C_r,\;r=1,\,2,\ldots,\,h$, are given by  
\begin{equation}
C_r=\sum_{s=1}^h \sum_{t=1}^h \mu_{rst}A_sB_t, \quad r=1,\,2,\ldots,\,h. \label{valC}
\end{equation}

Since the matrices $A_s$ and $B_t$ have an identical  linear structure for all values of $s$ and $t$, and this linear structure is preserved both under addition and multiplication of matrices, it follows that the $h$ matrices $C_r,\;i=1,\,2,\,\ldots,\,h$, have  the same linear structure as the matrices $A_i$ and $B_i$. Hence, in view of the relations \eqref{valrij},  all the  matrices $R_{ij}$ also have the same linear structure which is identical with the linear structure of  the matrices $P_{ij}$ and $Q_{ij}$.

Apart from the linear structure of the matrices $P_{ij}$ themselves,  the entries of the matrix $P$ satisfy certain additional linear conditions since the values of the $n^2$ matrices $P_{ij}$, as  given by \eqref{valpij}, are expressed  as a linear combination of  $h$ independent matrices $A_r,\;r=1,\,2,\ldots,\,h$. We note that the values of the  matrices $Q_{ij}$ and $R_{ij}$  given by \eqref{valqij} and \eqref{valrij} are expressed by exactly similar linear combinations of independent matrices $B_r, \;i=1,\,2,\,\ldots,\,h$, and $C_r, \;i=1,\,2,\,\ldots,\,h$ respectively, and therefore the additional linear conditions satisfied by the entries of the matrix $P$ are also satisfied by the entries of the matrices $Q$ and $R$. It now follows that the linear structure of the matrix $PQ=R$ is the same as that of the matrices $P$ and $Q$. 

Since the matrices $U$ and $V$ are square matrices of order $n$ and the matrices $A_i,\,B_i$ are square matrices of order $m$, it follows that $P$ and $Q$ are square matrices of order $mn$. Thus, given two square matrices of orders $m$ and $n$ with linear structures that are preserved under multiplication, we have  constructed a square matrix of order $mn$ with a linear structure that is also preserved under multiplication.

\section{Higher degree forms admitting composition and related diophantine equations}\label{highdegfms}
In this section we  will   construct forms of degree $n$ in $n$ variables when $n=3,\,4,\,6$ and $8$ such that these forms admit a composition identity \eqref{complaw}. We note that these forms cannot be expressed as the norms of any algebraic integers, and our results have not been obtained earlier.

We will use the composition identities \eqref{complaw}  to obtain infinitely many solutions in positive integers of certain diophantine equations,
\begin{equation}
f(x_1,\,x_2,\,\ldots,\,x_n)=1, \label{diopheqngen}
\end{equation}
where $f(x_1,\,x_2,\,\ldots,\,x_n)$ is a form of degree $n$ in $n$ variables and $n=4 $ or  $8$.

We also obtain infinitely many solutions in positive integers of the diophantine equation \eqref{diopheqngen} when $f(x_1,\,x_2,\,\ldots,\,x_n)$ is a quartic form in $6$  variables, and we show that these solutions can neither  be obtained by any parametric solution of Eq.~\eqref{diopheqngen} nor can they be derived from the integer points of any curve of genus 1. 

\subsection{Cubic forms}\label{cubicfms} In Section~\ref{mm33} we have already obtained a $3 \times 3$ matrix $A(x_1,\,x_2,\,x_3)$ with a linear structure that is preserved under multiplication. It now follows from  \eqref{mm33ident} that $f(x_1,\,x_2,\,x_3)={\rm det}(A(x_1,\,x_2,\,x_3)$ is a ternary cubic form admitting composition. Direct computation gives the form,
\begin{multline}
f(x_1,\,x_2,\,x_3)=x_1^3+(\lambda_1 +\lambda_3 )x_1^2x_2+(\lambda_2 +\lambda_5 )x_1^2x_3\\
+\lambda_3 (2\lambda_1 -2\lambda_2 -\lambda_3 +\lambda_5 )x_1x_2^2
+(\lambda_1 \lambda_5 +2\lambda_2 \lambda_3 -3\lambda_3 \lambda_4 )x_1x_2x_3\\
+(\lambda_1 \lambda_4 -\lambda_2 ^2+2\lambda_2 \lambda_5 -2\lambda_3 \lambda_4 )x_1x_3^2+\lambda_3^2(\lambda_1 -2\lambda_2 -\lambda_3 +\lambda_4 +\lambda_5 )x_2^3\\
-\lambda_3 (2\lambda_1 \lambda_4 -\lambda_1 \lambda_5 -2\lambda_2 ^2-\lambda_2 \lambda_3 +3\lambda_2 \lambda_5 -\lambda_3 \lambda_4 +\lambda_3 \lambda_5 -\lambda_5 ^2)x_2^2x_3\\
+(\lambda_1 ^2\lambda_4 -\lambda_1 \lambda_2 ^2+\lambda_1 \lambda_2 \lambda_5 -3\lambda_1 \lambda_3 \lambda_4 +\lambda_2 ^2\lambda_3 +\lambda_2 \lambda_3 \lambda_4 \\
+2\lambda_3 ^2\lambda_4 -2\lambda_3 \lambda_4 \lambda_5 )x_2x_3^2
+(\lambda_1 \lambda_2 \lambda_4 -\lambda_2 ^3+\lambda_2 ^2\lambda_5 -2\lambda_2 \lambda_3 \lambda_4 +\lambda_3 \lambda_4 ^2)x_3^3,
\end{multline}
and we have the composition identity,
\begin{equation}
f(x_1,\,x_2,\,x_3)f(y_1,\,y_2,\,y_3)=f(z_1,\,z_2,\,z_3), \label{cubicfmident}
\end{equation}
where $x_i,\,y_i,\;i=1,\,2,\,3$, are independent variables and the values of $z_i\;i=1,\,2,\,3$, are given by \eqref{valzmm33}.

We note that if $\alpha$ is a root of a monic cubic equation, the coefficients of $x_1^3,\,x_2^3,\,x_3^3$ in the norm of the  algebraic integer $x_1+x_2\alpha+x_3\alpha^2$ are in geometric progression. Since the coefficients of $x_1^3,\,x_2^3,\,x_3^3$ in the form $f(x_1,\,x_2,\,x_3)$ are not in geometric progression, it follows that  $f(x_1,\,x_2,\,x_3)$ is not the norm of an algebraic integer. 

\subsection{Quartic forms}\label{quarticfms}
In Section~\ref{quarticfms1} we will  obtain a quaternary quartic form  admitting composition and in Section~\ref{quarticfms2} we will consider a related quartic diophantine equation.

\subsubsection{}\label{quarticfms1} We will  now construct a $4 \times 4$ matrix with a linear structure that is preserved under multiplication. We will follow the method, as well as the notation, of Section~\ref{mmhom} to construct the desired matrix.

We first choose the matrix $U$ as follows:
\begin{equation}
U=\begin{bmatrix}
a_1 & a_2 \\ -qa_2 & a_1+pa_2\end{bmatrix}. \label{valU}
\end{equation}
The entries of the matrix $U$ are linear forms in the variables $a_1,\,a_2$ while $p, q$ are arbitrary integers. Further,  the  the linear structure of the matrix $U$ is exactly the same as that of the matrix $A(x_1,\,x_2)$ defined by \eqref{22ex2} and is thus preserved under multiplication. 

We will now construct the matrix $P$ by replacing the variables $a_1$ and $a_2$ in the matrix $U$ by matrices $A_1$ and $A_2$ both of which will have an identical linear structure that is preserved under multiplication. We thus get the matrix $P$ as
\begin{equation}
P=\begin{bmatrix}
A_1 & A_2 \\ -qA_2 & A_1+pA_2\end{bmatrix}. \label{valP}
\end{equation}

We will choose the matrices $A_1$ and $A_2$ as follows:
\begin{equation}
A_1=A(x_1,\,x_2), \quad A_2=A(x_3,\,x_4),
\end{equation}
where 
\begin{equation}
A(x_1,\,x_2)=\begin{bmatrix}
x_1 & x_2 \\ -nx_2 & x_1+mx_2\end{bmatrix},
\end{equation}
where $m,\,n$ are arbitrary integers. We note that the linear structure of the matrix $A(x_1,\,x_2)$, and hence also that of the matrices $A_1$ and $A_2$, is preserved under multiplication, and hence the linear structure of the matrix $P$ will be preserved under multiplication. 

The entries of the matrix $P$ are now given by linear forms in the variables $x_1,\,x_2,\,x_3,\,x_4$, and we may write the matrix $P=P(x_1,\,x_2,\,x_3,\,x_4)$ as follows:
\begin{equation}
\begin{bmatrix}
x_1 & x_2 & x_3 & x_4 \\ -n x_2 & x_1+m x_2 & -n x_4 & x_3+m x_4 \\ -q x_3 & -q x_4 & x_1+p x_3 & x_2+p x_4 \\ q n x_4 & -q (x_3+m x_4) & -nx_2-p n x_4 & x_1+m x_2+p (x_3+m x_4)
\end{bmatrix}.
\label{m44}
\end{equation}

We now replace the variables $x_1,\,x_2,\,x_3,\,x_4$ in the matrix $P(x_1,x_2,x_3,x_4)$ by independent variables $y_1,\,y_2,\,y_3,\,y_4$ to get a new matrix $P(y_1,\,y_2,\,y_3,\,y_4)$ with the same linear structure. Since the linear structure of the two matrices $P(x_1,\,x_2,\,x_3,\,x_4)$ and $P(y_1,\,y_2,\,y_3,\,y_4)$ is identical and is preserved under multiplication, we get the relation,
\begin{equation}
P(x_1,\,x_2,\,x_3,\,x_4) \times P(y_1,\,y_2,\,y_3,\,y_4)=P(z_1,\,z_2,\,z_3,\,z_4), \label{quarticfmPxyz} 
\end{equation}
where the values of $z_i,\;i=1,\,\ldots,\,4$, obtained readily from the first row of the matrix product $P(x_1,\,x_2,\,x_3,\,x_4) \times P(y_1,\,y_2,\,y_3,\,y_4)$,  are given by
\begin{equation}
\begin{aligned}
z_1& = x_1y_1 - nx_2y_2 - qx_3y_3 + qnx_4y_4,\\
 z_2 &= x_1y_2 + x_2y_1 + mx_2y_2 - qx_3y_4 - qx_4y_3 - mqx_4y_4,\\
 z_3& = x_1y_3 - nx_2y_4 + x_3y_1 + px_3y_3 - nx_4y_2 - npx_4y_4, \\
z_4& = x_1y_4 + x_2y_3 + mx_2y_4 + x_3y_2 + px_3y_4 \\
& \quad \quad + x_4y_1 + mx_4y_2 + px_4y_3 + mpx_4y_4.
\end{aligned}
\label{valzquarticfm}
\end{equation}
 
As an immediate consequence of the relation \eqref{quarticfmPxyz}, if we now write, 
\[
f(x_1,\,x_2,\,x_3,\,x_4) ={\rm det}(P(x_1,\,x_2,\,x_3,\,x_4)),\]
we get the composition identity,
\begin{equation}
f(x_1,\,x_2,\,x_3,\,x_4)f(y_1,\,y_2,\,y_3,\,y_4) =f(z_1,\,z_2,\,z_3,\,z_4), \label{complawquarticfm}
\end{equation}
where $f(x_1,\,x_2,\,x_3,\,x_4)$ is the quaternary quartic form given by 
\begin{multline}
f(x_1,\,x_2,\,x_3,\,x_4)=x_1^4 + 2mx_1^3x_2 + 2px_1^3x_3 + mpx_1^3x_4 \\
+ (m^2 + 2n)x_1^2x_2^2 + 3mpx_1^2x_2x_3 + (m^2 + 2n)px_1^2x_2x_4 \\
+ (p^2 + 2q)x_1^2x_3^2 + (p^2 + 2q)mx_1^2x_3x_4 + (m^2q + np^2 - 2nq)x_1^2x_4^2\\
 + 2mnx_1x_2^3 + (m^2 + 2n)px_1x_2^2x_3 + 3mnpx_1x_2^2x_4 + (p^2 + 2q)mx_1x_2x_3^2\\
 + (m^2p^2 + 8nq)x_1x_2x_3x_4 + (p^2 + 2q)mnx_1x_2x_4^2 + 2pqx_1x_3^3\\
 + 3mpqx_1x_3^2x_4 + (m^2 + 2n)pqx_1x_3x_4^2 + mnpqx_1x_4^3 + n^2x_2^4\\
 + mnpx_2^3x_3 + 2n^2px_2^3x_4 + (m^2q + np^2 - 2nq)x_2^2x_3^2 \\
+ (p^2 + 2q)mnx_2^2x_3x_4 + (p^2 + 2q)n^2x_2^2x_4^2 + mpqx_2x_3^3\\
 + (m^2 + 2n)pqx_2x_3^2x_4 + 3mnpqx_2x_3x_4^2 + 2n^2pqx_2x_4^3 + q^2x_3^4\\
 + 2mq^2x_3^3x_4 + (m^2 + 2n)q^2x_3^2x_4^2 + 2mnq^2x_3x_4^3 + n^2q^2x_4^4, \label{quarticfm}
\end{multline}
and the values of $z_i,\;i=1,\,\ldots,\,4$, are given by \eqref{valzquarticfm}.

As in the case of the ternary cubic form obtained in Section \ref{cubicfms}, it is readily observed that the form $f(x_1,\,x_2,\,x_3,\,x_4)$ is not the norm of an algebraic integer. 
 
\subsubsection{}\label{quarticfms2}

We will now consider the diophantine equation,
\begin{equation}
f(x_1,\,x_2,\,x_3,\,x_4)=1, \label{quarticeqn}
\end{equation}
where  $f(x_1,\,x_2,\,x_3,\,x_4)$ is the quartic form defined by \eqref{quarticfm}. We will first show that the set $S$ of integer solutions of Eq.~\eqref{quarticeqn} forms an abelian  group with respect to a suitably defined operation of addition. 

We note that $(1,0,0,0)$ is a solution of Eq.~\eqref{quarticeqn} and hence the set $S$ is nonempty. Next, we observe that if $(x_1, \, x_2, \,x_3,\, x_4)$ and $(y_1, \, y_2, \,y_3,\, y_4)$ are any two integer solutions of Eq.~\eqref{quarticeqn}, and the values of   $z_i,\;i=1,\,\ldots,\,4$, are defined by \eqref{valzquarticfm}, then $(z_1, \, z_2, \,z_3,\, z_4)$ is also an integer solution of Eq.~\eqref{quarticeqn}. Accordingly, we define the operation of addition as follows:
\begin{equation}
(x_1, \, x_2, \,x_3,\, x_4)+(y_1, \, y_2, \,y_3,\, y_4)=(z_1, \, z_2, \,z_3,\, z_4). \label{defaddition}
\end{equation}

It is readily seen that this operation is commutative, the identity element is $(1,0,0,0)$ and the operation is associative. The inverse $(y_1, \, y_2, \,y_3,\, y_4)$, of an arbitrary element $(x_1, \, x_2, \,x_3,\, x_4)$ of $S$,  is obtained by solving the equations \eqref{valzquarticfm} where we take $(z_1, \, z_2, \,z_3,\, z_4)=(1,0,0,0)$, and we thus obtain, 
\begin{equation}
\begin{aligned}
y_1&=x_1^3 + 2mx_1^2x_2 + 2px_1^2x_3 + mpx_1^2x_4 + (m^2 + n)x_1x_2^2 \\
& \quad \quad + 3mpx_1x_2x_3 + p(m^2 + 2n)x_1x_2x_4 + (p^2 + q)x_1x_3^2\\
& \quad \quad + m(p^2 + 2q)x_1x_3x_4 + (m^2q + np^2 - nq)x_1x_4^2 + mnx_2^3\\
& \quad \quad + m^2px_2^2x_3+ 2mnpx_2^2x_4 + mp^2x_2x_3^2 \\
 & \quad \quad+ (m^2p^2 + 2nq)x_2x_3x_4 + mn(p^2 + q)x_2x_4^2+ pqx_3^3 \\
 & \quad \quad+ 2mpqx_3^2x_4 + pq(m^2 + n)x_3x_4^2 + mnpqx_4^3, \\
y_2& = -x_1^2x_2 - mx_1x_2^2 - 2px_1x_2x_3 - mpx_1x_2x_4 - 2qx_1x_3x_4 \\
& \quad \quad- mqx_1x_4^2 - nx_2^3 - mpx_2^2x_3 - 2npx_2^2x_4\\
  & \quad \quad+ (-p^2 + q)x_2x_3^2- mp^2x_2x_3x_4 - n(p^2 + q)x_2x_4^2\\
  & \quad \quad	- pqx_3^2x_4 - mpqx_3x_4^2 - npqx_4^3,\\
 y_3& = -x_1^2x_3 - 2mx_1x_2x_3 - 2nx_1x_2x_4 - px_1x_3^2 - mpx_1x_3x_4 \\
& \quad \quad- npx_1x_4^2 + (-m^2 + n)x_2^2x_3 - mnx_2^2x_4 - mpx_2x_3^2\\
& \quad \quad - m^2px_2x_3x_4 - mnpx_2x_4^2 - qx_3^3 - 2mqx_3^2x_4 \\
& \quad \quad - q(m^2 + n)x_3x_4^2 - mnqx_4^3,\\
 y_4& = -x_1^2x_4 + 2x_1x_2x_3 + mx_2^2x_3 + nx_2^2x_4 + px_2x_3^2 \\
& \quad \quad+ mpx_2x_3x_4 + npx_2x_4^2 + qx_3^2x_4 + mqx_3x_4^2 + nqx_4^3.
\end{aligned}
\end{equation}

Since $(x_1, \, x_2, \,x_3,\, x_4)$ is assumed to be a solution of  Eq.~\eqref{quarticeqn}, if we now  substitute the above  values of $y_i$ in the identity \eqref{complawquarticfm}, we get $f(y_1, y_2, y_3, y_4)$ $=f(1,\,0,\,0,\,0)=1$ which confirms that  $(y_1,\,y_2,\,y_3,\,y_4)$ is indeed the inverse of $(x_1, \, x_2, \,x_3,\, x_4)$. We have thus established that the integer solutions of Eq.~\eqref{quarticeqn} form an abelian group.

We will now consider the diophantine equation $f(x_1,\,x_2,\,x_3,\,x_4)=1$ when $m=5,\,n=-23,\,p=2,\,q=-7$, that is, the equation,
\begin{multline}
x_1^4 + 10x_1^3x_2 + 4x_1^3x_3 + 10x_1^3x_4 - 21x_1^2x_2^2 + 30x_1^2x_2x_3\\
 - 42x_1^2x_2x_4 - 10x_1^2x_3^2 - 50x_1^2x_3x_4 - 589x_1^2x_4^2\\
 - 230x_1x_2^3 - 42x_1x_2^2x_3 - 690x_1x_2^2x_4 - 50x_1x_2x_3^2\\
 + 1388x_1x_2x_3x_4 + 1150x_1x_2x_4^2 - 28x_1x_3^3 - 210x_1x_3^2x_4\\
 + 294x_1x_3x_4^2 + 1610x_1x_4^3 + 529x_2^4 - 230x_2^3x_3 + 2116x_2^3x_4\\
 - 589x_2^2x_3^2 + 1150x_2^2x_3x_4 - 5290x_2^2x_4^2 - 70x_2x_3^3 \\
+ 294x_2x_3^2x_4 + 4830x_2x_3x_4^2 - 14812x_2x_4^3 + 49x_3^4 \\
+ 490x_3^3x_4 - 1029x_3^2x_4^2 - 11270x_3x_4^3 + 25921x_4^4=1. \label{quarticeqnex1}
\end{multline}  

It is readily verified that $(6,\,2,\,3,\,1)$ is a  numerical solution of \eqref{quarticeqnex1}. If $(\alpha_{11},\,\alpha_{12},\,\alpha_{13},\,\alpha_{14})$ is an arbitrary  integer solution of Eq.~\eqref{quarticeqnex1} such that $\alpha_{1i} > 0$ for each $i$, on adding $(6,\,2,\,3,\,1)$ to the solution $(\alpha_{11},\,\alpha_{12},\,\alpha_{13},\,\alpha_{14})$, we obtain a  solution $(\alpha_{21},\,\alpha_{22},\,\alpha_{23},\,\alpha_{24})$ of Eq.~\eqref{quarticeqnex1} where
\begin{equation}
\begin{aligned}
 \alpha_{21}&=6\alpha_{11} + 46\alpha_{12} + 21\alpha_{13} + 161\alpha_{14}, \\
 \alpha_{22}&=2\alpha_{11} + 16\alpha_{12} + 7\alpha_{13} + 56\alpha_{14}, \\
 \alpha_{23}&= 3\alpha_{11} + 23\alpha_{12} + 12\alpha_{13} + 92\alpha_{14}, \\
 \alpha_{24}&= \alpha_{11} + 8\alpha_{12} + 4\alpha_{13} + 32\alpha_{14}.
\end{aligned}
\end{equation}
Since $\alpha_{1i} > 0$ for each $i$, it immediately follows that $\alpha_{21} > \alpha_{11}$ and hence this solution is distinct from the solution $(\alpha_{11},\,\alpha_{12},\,\alpha_{13},\,\alpha_{14})$. Further, $\alpha_{2i} > 0$ for each $i$, and we may  therefore add $(6,\,2,\,3,\,1)$ to the solution $(\alpha_{21},\,\alpha_{22},\,\alpha_{23},\,\alpha_{24})$ to get a new solution $(\alpha_{31},\,\alpha_{32},\,\alpha_{33},\,\alpha_{34})$ such that $\alpha_{31} > \alpha_{21} > \alpha_{11}$, and $\alpha_{3i} > 0$ for each $i$,  and we may repeat the process any number of times to get an infinite sequence of integer solutions of Eq.~\eqref{quarticeqnex1}.

If we  take the initial  solution $(\alpha_{11},\,\alpha_{12},\,\alpha_{13},\,\alpha_{14})$ as $(6,\,2,\,3,\,1)$, the next three solutions of Eq.~\eqref{quarticeqnex1} obtained by the above process are as follows:
\[
\begin{array}{c}
(352,\, 121,\,  192,\, 66),\,\quad (22336,\,  7680,\,  12215,\,  4200),\,\\
 (1420011,\, 488257,\,  776628,\,  267036).
\end{array}\]

\subsection{Sextic forms}\label{sexticfms}
In Sections~\ref{sexticfms1} and \ref{sexticfms2},  we will  obtain two senary sextic forms  admitting composition and in Section~\ref{sexticfms3} we will consider  related diophantine equations.

\subsubsection{}\label{sexticfms1} We will  now construct a $6 \times 6$ matrix with a linear structure that is preserved under multiplication by exactly the same procedure as in Section~\ref{quarticfms1} except that in the matrix  \eqref{valU} we will now replace   the variables $a_1$ and $a_2$ by $3 \times 3$ matrices $A_1=A(x_1, x_2, x_3)$ and $A_2=A(x_4, x_5, x_6)$, where $A(x_1, x_2, x_3)$ is the $3 \times 3$  matrix defined by \eqref{cubicmatrix} and \eqref{xijcubicmatrix}. We thus get the $6 \times 6$ matrix
given below:
\begin{equation}
P(x_1,\, \ldots,\, x_6)=\begin{bmatrix} A(x_1, x_2, x_3) & A(x_4, x_5, x_6) \\
-qA(x_4, x_5, x_6) & A(x_1, x_2, x_3) + pA(x_4, x_5, x_6) \end{bmatrix}. \label{P66}
\end{equation}

Since the linear structure of the matrix $A(x_1,\,x_2, x_3)$ is preserved under multiplication, it follows that the linear structure of the matrix $P(x_1, \ldots, x_6)$ is also preserved under multiplication. If we now write
\begin{equation}
f(x_1, x_2, x_3, x_4, x_5, x_6)={\rm det}(\left(P(x_1,\,x_2,\,x_3,\,x_4,\,x_5,\,x_6)\right),
\end{equation}
we immediately get the composition formula,
\begin{equation}
f(x_1, x_2,\,\ldots, \,x_6)f( y_1,  y_2,\,\ldots, \,  y_6)=f( z_1,  z_2, \,\ldots, \,  z_6),
\end{equation}
where $f(x_1,\, x_2,\, \ldots,\, x_6)$ is a sextic form in the variables $x_i,\;i=1,\,\ldots,\,6$, while $\lambda_i,\;i=1,\,\ldots,\,5$, and $p, \,q$ are arbitrary integer parameters and the values of $z_i,\,i=1,\,\ldots,\,6$, are given by
\begin{equation}
\begin{aligned}
z_1 &= x_1y_1 - \lambda_3(\lambda_1 - \lambda_2 - \lambda_3 + \lambda_5)x_2y_2 - \lambda_3(\lambda_2 - \lambda_4)x_2y_3\\ & \quad
 - \lambda_3(\lambda_2 - \lambda_4)x_3y_2 + (-\lambda_1\lambda_4 + \lambda_2^2 - \lambda_2\lambda_5 + \lambda_3\lambda_4)x_3y_3\\ & \quad
 - qx_4y_4 + q\lambda_3(\lambda_1 - \lambda_2 - \lambda_3 + \lambda_5)x_5y_5 + q\lambda_3(\lambda_2 - \lambda_4)x_5y_6\\ & \quad
 + q\lambda_3(\lambda_2 - \lambda_4)x_6y_5 + q(\lambda_1\lambda_4 - \lambda_2^2 + \lambda_2\lambda_5 - \lambda_3\lambda_4)x_6y_6, \\ 
z_2 &= x_1y_2 + x_2y_1 + \lambda_1x_2y_2 + \lambda_2x_2y_3 + \lambda_2x_3y_2 + \lambda_4x_3y_3\\ & \quad
 - qx_4y_5 - qx_5y_4 - \lambda_1qx_5y_5 - \lambda_2qx_5y_6 - \lambda_2qx_6y_5 - \lambda_4qx_6y_6, \\
z_3 &= x_1y_3 + \lambda_3x_2y_2 + \lambda_3x_2y_3 + x_3y_1 + \lambda_3x_3y_2 + \lambda_5x_3y_3\\ & \quad
 - qx_4y_6 - q\lambda_3x_5y_5 - q\lambda_3x_5y_6 - qx_6y_4 - q\lambda_3x_6y_5 - \lambda_5qx_6y_6, \\
z_4 &= x_1y_4 - \lambda_3(\lambda_1 - \lambda_2 - \lambda_3 + \lambda_5)x_2y_5 - \lambda_3(\lambda_2 - \lambda_4)x_2y_6\\ & \quad
 - \lambda_3(\lambda_2 - \lambda_4)x_3y_5 + (-\lambda_1\lambda_4 + \lambda_2^2 - \lambda_2\lambda_5 + \lambda_3\lambda_4)x_3y_6\\ & \quad
 + x_4y_1 + px_4y_4 - \lambda_3(\lambda_1 - \lambda_2 - \lambda_3 + \lambda_5)x_5y_2 \\ & \quad
- \lambda_3(\lambda_2 - \lambda_4)x_5y_3 - \lambda_3(\lambda_1 - \lambda_2 - \lambda_3 + \lambda_5)px_5y_5 \\ & \quad
- \lambda_3(\lambda_2 - \lambda_4)px_5y_6 - \lambda_3(\lambda_2 - \lambda_4)x_6y_2 \\ & \quad
+ (-\lambda_1\lambda_4 + \lambda_2^2 - \lambda_2\lambda_5 + \lambda_3\lambda_4)x_6y_3 - \lambda_3(\lambda_2 - \lambda_4)px_6y_5\\ & \quad
 - p(\lambda_1\lambda_4 - \lambda_2^2 + \lambda_2\lambda_5 - \lambda_3\lambda_4)x_6y_6,\\ 
 z_5 &= x_1y_5 + x_2y_4 + \lambda_1x_2y_5 + \lambda_2x_2y_6 + \lambda_2x_3y_5 + \lambda_4x_3y_6\\ & \quad
 + x_4y_2 + px_4y_5 + x_5y_1 + \lambda_1x_5y_2 + \lambda_2x_5y_3 + px_5y_4 \\& \quad
+ \lambda_1px_5y_5 + \lambda_2px_5y_6 + \lambda_2x_6y_2 + \lambda_4x_6y_3 + \lambda_2px_6y_5 + \lambda_4px_6y_6, \\ 
z_6 &= x_1y_6 + \lambda_3x_2y_5 + \lambda_3x_2y_6 + x_3y_4 + \lambda_3x_3y_5 + \lambda_5x_3y_6 \\ & \quad
+ x_4y_3 + px_4y_6 + \lambda_3x_5y_2 + \lambda_3x_5y_3 + \lambda_3px_5y_5 + \lambda_3px_5y_6\\ & \quad
 + x_6y_1 + \lambda_3x_6y_2 + \lambda_5x_6y_3 + px_6y_4 + \lambda_3px_6y_5 + \lambda_5px_6y_6.
\end{aligned}
\label{sexticz}
\end{equation}

It has been verified, using the software MAPLE, that the sextic form $f(x_1, x_2,\,\ldots,\, x_6)$ is irreducible for arbitrary values of the parameters $\lambda_i,\,p$ and $q$ and further, the  form $f(x_1, x_2, x_3, x_4, x_5, x_6)$ is not the norm of an algebraic integer.

We do not give the sextic form $f(x_1, x_2, x_3, x_4, x_5, x_6)$ explicitly as it is too cumbersome to write. According to the software MAPLE, there are $11926$  terms in the expansion of $f(x_1, x_2, x_3, x_4, x_5, x_6)$.

As in the case of the diophantine equation \eqref{quarticeqn}, it is readily established that the set of integer solutions of the sextic diophantine equation $f(x_1, x_2, x_3, x_4, x_5, x_6)=1$ forms an abelian  group.

\subsubsection{}\label{sexticfms2} 
We will now obtain a senary sextic form admitting composition by taking  the matrices $A(x_1, x_2, x_3)$ and $A(x_4, x_5, x_6)$ in the matrix \eqref{P66} as  $3 \times 3$ circulant matrices. 
 For simplicity, we take $p=0$ and replace $q$ by -$q$, and now we get the matrix $P(x_1, x_2, x_3, x_4, x_5, x_6)$ may be written as 
\begin{equation}
P(x_1,\,\ldots,\, x_6)
=\begin{bmatrix}
x_1 &  x_2 &  x_3 &  x_4 &  x_5 &  x_6\\
x_3 &  x_1 &  x_2 &  x_6 &  x_4 &  x_5\\
x_2 &  x_3 &  x_1 &  x_5 &  x_6 &  x_4\\
qx_4   &   qx_5   &   qx_6   &   x_1   &   x_2   &   x_3\\
qx_6   &   qx_4   &   qx_5   &   x_3   &   x_1   &   x_2\\
qx_5   &   qx_6   &   qx_4   &   x_2   &   x_3   &   x_1
\end{bmatrix}.
\end{equation}
Since the linear structure of circulant matrices is preserved under multiplication, it follows that the linear structure of the matrix 
$P(x_1, x_2, x_3, x_4, x_5, x_6)$ is also preserved under multiplication. Thus, 
\[f(x_1, x_2, x_3, x_4, x_5, x_6)={\rm det}\left(P(x_1,\,x_2,\,x_3,\,x_4,\,x_5,\,x_6)\right),\]
gives a senary sextic form admitting composition.

We note that the above senary form has two irreducible factors given by
\begin{equation}
f(x_1, x_2,\,\ldots,\, x_6) = f_1(x_1, x_2,\,\ldots,\, x_6)f_2(x_1, x_2,\,\ldots,\, x_6),
\end{equation}
where
\begin{equation}
f_1(x_1, x_2, x_3, x_4, x_5, x_6)=(x_1+x_2+x_3)^2-q(x_4 + x_5 + x_6)^2,
\end{equation}
\begin{multline}
f_2(x_1, x_2, x_3, x_4, x_5, x_6)=x_1^4-(2x_2 +2x_3)x_1^3 + (3x_2^2 + 3x_3^2 - 2qx_4^2\\
 + 2qx_4x_5 + 2qx_4x_6 + qx_5^2 - 4qx_5x_6 + qx_6^2)x_1^2 \\
+ (-2x_2^3 + 2qx_2x_4^2 - 8qx_2x_4x_5 + 4qx_2x_4x_6 + 2qx_2x_5^2 + 4qx_2x_5x_6\\
 - 4qx_2x_6^2 - 2x_3^3 + 2qx_3x_4^2 + 4qx_3x_4x_5 - 8qx_3x_4x_6 - 4qx_3x_5^2\\
 + 4qx_3x_5x_6 + 2qx_3x_6^2)x_1+x_2^4-2x_2^3x_3+3x_2^2x_3^2+qx_2^2x_4^2\\
+2qx_2^2x_4x_5-4qx_2^2x_4x_6-2qx_2^2x_5^2+2qx_2^2x_5x_6+qx_2^2x_6^2-2x_2x_3^3\\
-4qx_2x_3x_4^2+4qx_2x_3x_4x_5+4qx_2x_3x_4x_6+2qx_2x_3x_5^2-8qx_2x_3x_5x_6\\
+2qx_2x_3x_6^2+x_3^4+qx_3^2x_4^2-4qx_3^2x_4x_5+2qx_3^2x_4x_6+qx_3^2x_5^2\\
+2qx_3^2x_5x_6-2qx_3^2x_6^2+q^2x_4^4-2q^2x_4^3x_5-2q^2x_4^3x_6+3q^2x_4^2x_5^2\\
+3q^2x_4^2x_6^2-2q^2x_4x_5^3-2q^2x_4x_6^3+q^2x_5^4-2q^2x_5^3x_6\\
+3q^2x_5^2x_6^2-2q^2x_5x_6^3+q^2x_6^4,
\end{multline}
and, in accordance with a theorem of Dickson \cite[p. 219, Theorem 3]{Di},  we now get the simultaneous composition  identities,
\begin{equation}
\begin{aligned}
f_1(x_1, x_2,\,\ldots, \,x_6)f_1( y_1,  y_2,\,\ldots, \,  y_6)=f_1( z_1,  z_2, \,\ldots, \,  z_6),\\
f_2(x_1, x_2,\,\ldots, \,x_6)f_2( y_1,  y_2,\,\ldots, \,  y_6)=f_2( z_1,  z_2, \,\ldots, \,  z_6),
\end{aligned}
\label{simulcompsextic}
\end{equation}
where $x_i,\, y_i,\; i=1,\,2,\,\ldots,\,6$, are arbitrary while the values of $z_i,\; i=1,\,2,\,\ldots,\,6$,  are given by
\begin{equation}
\begin{aligned}
z_1 &= x_1y_1 + x_2y_3 + x_3y_2 + qx_4y_4 + qx_5y_6 + qx_6y_5,\\
 z_2 &= x_1y_2 + x_2y_1 + x_3y_3 + qx_4y_5 + qx_5y_4 + qx_6y_6,\\
 z_3 &= x_1y_3 + x_2y_2 + x_3y_1 + qx_4y_6 + qx_5y_5 + qx_6y_4,\\
 z_4 &= x_1y_4 + x_2y_6 + x_3y_5 + x_4y_1 + x_5y_3 + x_6y_2,\\
 z_5 &= x_1y_5 + x_2y_4 + x_3y_6 + x_4y_2 + x_5y_1 + x_6y_3, \\
z_6 &= x_1y_6 + x_2y_5 + x_3y_4 + x_4y_3 + x_5y_2 + x_6y_1.
\end{aligned}
\label{valzsimulcompsextic}
\end{equation}

We note that the form $f_1(x_1, x_2, x_3, x_4, x_5, x_6)$ has just two independent variables, and accordingly we make a suitable linear transformation after which we can rewrite the formulae \eqref{simulcompsextic} for simultaneous composition of forms as follows:
\begin{equation}
\begin{aligned}
f_1(u_i)f_1( v_i)&=f_1( w_i),\\
f_2(u_i)f_2( v_i)&=f_2( w_i),
\end{aligned}
\label{simulcompsexticuvw}
\end{equation}
where
\begin{equation}
f_1(u_i)=u_1^2 - qu_2^2, \label{deff1sextic}
\end{equation}
and
\begin{multline}
f_2(u_i)=u_1^4-(6 u_3+6 u_6) u_1^3+(q u_2^2-6 q u_2 u_5+15 u_3^2+24 u_3 u_6\\
-3 q u_4^2+6 q u_4 u_5+6 q u_5^2+15 u_6^2) u_1^2+(-6 q u_2^2 u_6-12 q u_2 u_3 u_4\\
+12 q u_2 u_3 u_5+12 q u_2 u_4 u_6+24 q u_2 u_5 u_6-18 u_3^3-36 u_3^2 u_6\\
+18 q u_3 u_4^2-18 q u_3 u_5^2-36 u_3 u_6^2-36 q u_4 u_5 u_6-18 q u_5^2 u_6-18 u_6^3) u_1\\
+q^2u_2^4 - 6q^2u_2^3u_4 - 6q^2u_2^3u_5 + 15q^2u_2^2u_4^2 + 24q^2u_2^2u_4u_5\\
 + 15q^2u_2^2u_5^2 - 18q^2u_2u_4^3 - 36q^2u_2u_4^2u_5 - 36q^2u_2u_4u_5^2 \\
- 18q^2u_2u_5^3 + 9q^2u_4^4 + 18q^2u_4^3u_5 + 27q^2u_4^2u_5^2 + 18q^2u_4u_5^3\\
 + 9q^2u_5^4 - 3qu_2^2u_3^2 + 6qu_2^2u_3u_6 + 6qu_2^2u_6^2 + 18qu_2u_3^2u_4\\
 - 36qu_2u_3u_5u_6 - 18qu_2u_4u_6^2 - 18qu_2u_5u_6^2 - 18qu_3^2u_4^2\\
 - 18qu_3^2u_4u_5 + 9qu_3^2u_5^2 - 18qu_3u_4^2u_6 + 36qu_3u_4u_5u_6 \\
+ 36qu_3u_5^2u_6+ 9qu_4^2u_6^2 + 36qu_4u_5u_6^2 + 9qu_5^2u_6^2 + 9u_3^4\\
 + 18u_3^3u_6 + 27u_3^2u_6^2 + 18u_3u_6^3 + 9u_6^4, \label{deff2sextic}
\end{multline}
and the values of $w_i,\; i=1,\,2,\,\ldots,\,6$, in the simultaneous composition formulae \eqref{simulcompsexticuvw} are given by
\begin{equation}
\begin{aligned}
w_1 & = u_1v_1 + qu_2v_2,\\
 w_2 & = u_1v_2 + u_2v_1,\\
 w_3 & = u_1v_3 + qu_2v_4 + u_3v_1 - 2u_3v_3 - u_3v_6 + qu_4v_2 \\
& \quad - 2qu_4v_4 - qu_4v_5 - qu_5v_4 + qu_5v_5 - u_6v_3 + u_6v_6, \\
w_4 & = u_1v_4 + u_2v_6 - u_3v_4 + u_3v_5 + u_4v_1 - u_4v_3 - 2u_4v_6 + u_5v_3\\
& \quad - u_5v_6 + u_6v_2 - 2u_6v_4 - u_6v_5, \\
w_5 & = u_1v_5 + u_2v_3 + u_3v_2 - u_3v_4 - 2u_3v_5 - u_4v_3 \\
& \quad+ u_4v_6 + u_5v_1 - 2u_5v_3 - u_5v_6 + u_6v_4 - u_6v_5, \\
w_6 & = u_1v_6 + qu_2v_2 - qu_2v_4 - qu_2v_5 + u_3v_3 - u_3v_6 - qu_4v_2 + qu_4v_4\\
& \quad + 2qu_4v_5 - qu_5v_2 + 2qu_5v_4 + qu_5v_5 + u_6v_1 - u_6v_3 - 2u_6v_6.
\end{aligned}
\end{equation}

\subsubsection{}\label{sexticfms3}
With the forms $f_1(u_i)$ and $f_2(u_i)$ defined by \eqref{deff1sextic} and \eqref{deff2sextic} respectively, we will now consider the  the simultaneous diophantine equations $f_1(u_i)=1$ and $f_2(u_i)=1$ in the special case when $q=3$, that is, the simultaneous equations,
\begin{equation}
u_1^2 - 3u_2^2=1, \label{eq1simulsextic}
\end{equation}
and
\begin{multline}
u_1^4 - 6u_1^3u_3 - 6u_1^3u_6 + 3u_1^2u_2^2 - 18u_1^2u_2u_5 + 15u_1^2u_3^2\\
 + 24u_1^2u_3u_6 - 9u_1^2u_4^2 + 18u_1^2u_4u_5 + 18u_1^2u_5^2 + 15u_1^2u_6^2\\
 - 18u_1u_2^2u_6 - 36u_1u_2u_3u_4 + 36u_1u_2u_3u_5 + 36u_1u_2u_4u_6 \\
+ 72u_1u_2u_5u_6 - 18u_1u_3^3 - 36u_1u_3^2u_6 + 54u_1u_3u_4^2 - 54u_1u_3u_5^2\\
 - 36u_1u_3u_6^2 - 108u_1u_4u_5u_6 - 54u_1u_5^2u_6 - 18u_1u_6^3 + 9u_2^4\\
 - 54u_2^3u_4 - 54u_2^3u_5 - 9u_2^2u_3^2 + 18u_2^2u_3u_6 + 135u_2^2u_4^2\\
 + 216u_2^2u_4u_5 + 135u_2^2u_5^2 + 18u_2^2u_6^2 + 54u_2u_3^2u_4 \\
- 108u_2u_3u_5u_6 - 162u_2u_4^3 - 324u_2u_4^2u_5 - 324u_2u_4u_5^2 \\
- 54u_2u_4u_6^2 - 162u_2u_5^3 - 54u_2u_5u_6^2 + 9u_3^4 + 18u_3^3u_6 \\
- 54u_3^2u_4^2 - 54u_3^2u_4u_5 + 27u_3^2u_5^2 + 27u_3^2u_6^2 - 54u_3u_4^2u_6\\
 + 108u_3u_4u_5u_6 + 108u_3u_5^2u_6 + 18u_3u_6^3 + 81u_4^4 + 162u_4^3u_5\\
 + 243u_4^2u_5^2 + 27u_4^2u_6^2 + 162u_4u_5^3 + 108u_4u_5u_6^2 + 81u_5^4 \\
+ 27u_5^2u_6^2 + 9u_6^4=1, \label{eq2simulsextic}
\end{multline}
It is readily verified that a numerical solution of the simultaneous equations \eqref{eq1simulsextic} and \eqref{eq2simulsextic} is given by 
\begin{equation}(u_1, u_2, u_3, u_4, u_5, u_6)=(2, 1, 3, -1, 3, -4). \label{knownsolsimulsextic}
\end{equation}

By applying the composition identities \eqref{simulcompsexticuvw}, we can combine any integer solution $u_i=\alpha_{1i},\; i=1,\,2,\,\ldots,\,6$,  of the simultaneous diophantine equations \eqref{eq1simulsextic} and\eqref{eq2simulsextic}, with the known solution \eqref{knownsolsimulsextic} to obtain a new  solution $u_i=\alpha_{2i},\; i=1,\,2,\,\ldots,\,6$, of Eqs.~\eqref{eq1simulsextic} and \eqref{eq2simulsextic}. The new solution is given by
\begin{equation}
\begin{aligned}
\alpha_{21} & = 2\alpha_{11} + 3\alpha_{12}, \\
\alpha_{22}& = \alpha_{11} + 2\alpha_{12}, \\
\alpha_{23}& = 3\alpha_{11} - 3\alpha_{12} + 12\alpha_{15} - 7\alpha_{16}, \\
\alpha_{24}& = -\alpha_{11} - 4\alpha_{12} + 4\alpha_{13} + 7\alpha_{14} + 7\alpha_{15}, \\
\alpha_{25}& = 3\alpha_{11} + 3\alpha_{12} - 4\alpha_{13} - 7\alpha_{14} - 4\alpha_{16},\\
 \alpha_{26} & = -4\alpha_{11} - 3\alpha_{12} + 7\alpha_{13} + 12\alpha_{14} + 7\alpha_{16}.
\end{aligned}
\label{sol2simulsextic}
\end{equation}

If $\alpha_{11} > 0$ and $\alpha_{12} > 0$, it follows from \eqref{sol2simulsextic} that  $\alpha_{21} > \alpha_{11} > 0$ and $\alpha_{22} >\alpha_{12} >  0$, and hence the solution $u_i=\alpha_{2i},\; i=1,\,2,\,\ldots,\,6$, is distinct from the solution $u_i=\alpha_{1i},\; i=1,\,2,\,\ldots,\,6$. Further, we may now combine the solution  $u_i=\alpha_{2i},\; i=1,\,2,\,\ldots,\,6$, with the known solution \eqref{knownsolsimulsextic} to obtain  a new solution $u_i=\alpha_{3i},\; i=1,\,2,\,\ldots,\,6$, of Eqs.~\eqref{eq1simulsextic} and \eqref{eq2simulsextic} such that  $\alpha_{31} > \alpha_{21} > 0$ and $\alpha_{32} >\alpha_{22} >  0$, and we can repeat  the process any number of times  to obtain an infinite sequence of integer solutions of the simultaneous diophantine equations \eqref{eq1simulsextic} and \eqref{eq2simulsextic}.

If we take the initial solution 
$(\alpha_{11}, \alpha_{12}, \alpha_{13}, \alpha_{14}, \alpha_{15}, \alpha_{16})$ as $ (2, 1, 3, -1,$ $  3, -4)$, the next three solutions of the infinite sequence of integer solutions of Eqs.~\eqref{eq1simulsextic} and\eqref{eq2simulsextic} are given by 
\[
\begin{array}{c}
(7,\,  4,\,  67,\, 20,\, 20,\, -30),\, \quad (26,\,  15,\,  459,\,  525,\,  -255,\, 459),\,\\
( 97,\,  56,\,  -6240,\, 3640,\,  -7224,\, 12577).
\end{array}
\]

We know that a parametric solution of the Pell's equation does not exist. Since Eq.~\eqref{eq1simulsextic} is a Pell's equation, it cannot have a parametric solution. It follows that the infinite sequence of integer solutions of the simultaneous diophantine equations \eqref{eq1simulsextic} and \eqref{eq2simulsextic}  cannot be generated by a parametric solution or even by a finite set of parametric solutions. Further, since there can only be finitely many integer points on a curve of genus 1, the infinitely many integer solutions of Eq.~\eqref{eq2simulsextic} cannot be obtained from the integer points of any curve of genus 1. Thus, we have obtained infinitely many solutions in positive integers of the quartic equation \eqref{eq2simulsextic} and these solutions can neither be obtained from a parametric solution nor from  the integer points of a curve of genus 1. 

\subsection{Octic forms}\label{octicfms}
We will now construct octonary octic forms admitting composition and consider related octic diophantine equations.

\subsubsection{}\label{octicfms1} We follow the familiar procedure that has already been applied  in Sections~\ref{quarticfms1}, \ref{sexticfms1} and \ref{sexticfms2}.
We first choose the matrix $U$ as follows:
\begin{equation}
U=\begin{bmatrix}
a_1 & a_2 \\ -sa_2 & a_1+ra_2\end{bmatrix}. \label{valUoctic}
\end{equation}
This matrix has been obtained by a suitable renaming of the parameters in the matrix $U$ given by \eqref{valU}, and accordingly its linear structure is preserved under multiplication. We will now construct an $8 \times 8$  matrix $P$ by replacing the variables $a_1$ and $a_2$ in \eqref{valUoctic}  by two $4 \times 4$ matrices  $A_1$ and $A_2$ with  an identical linear structure that is preserved under multiplication. 

In Section~\ref{quarticfms1} we have already constructed the  $4 \times 4$ matrix \eqref{m44} whose linear structure is preserved under multiplication. We now name this matrix as $A(x_1,\,x_2,\,x_3,\,x_4)$, that is, we   write,
\small
\begin{equation*}
A(x_1,\ldots,x_4)=\begin{bmatrix}x_1 & x_2 & x_3 & x_4 \\ -n x_2 & x_1+m x_2 & -n x_4 & x_3+m x_4 \\ -q x_3 & -q x_4 & x_1+p x_3 & x_2+p x_4 \\ q n x_4 & -q (x_3+m x_4) & -nx_2-p n x_4 & x_1+m x_2+p (x_3+m x_4)
\end{bmatrix}
\end{equation*}
\normalsize
We now choose the matrices $A_1$ and $A_2$ as follows:
\begin{equation}
A_1=A(x_1,\,x_2,\,x_3,\,x_4),\quad A_2=A(x_5,\,x_6,\,x_7,\,x_8),
\end{equation}
and thus get the matrix $P(x_1,\,x_2,\,\ldots,\,x_8)$ as follows:
\begin{equation*}
P(x_1,\,x_2,\,\ldots,\,x_8)=\begin{bmatrix}
A(x_1,\,\ldots,\,x_4) & A(x_5,\,\ldots,\,x_8) \\ -sA(x_5,\,\ldots,\,x_8) & A(x_1,\,\ldots,\,x_4)+rA(x_5,\,\ldots,\,x_8)\end{bmatrix}. \label{valPoctic}
\end{equation*}

Since the linear structure of the matrix $A(x_1,\,x_2, x_3,\,x_4)$ is preserved under multiplication,  the linear structure of the matrix $P(x_1,\,x_2,\,\ldots,\,x_8)$ is also preserved under multiplication. Thus, on multiplying two such matrices, we get
\begin{equation}
P(x_1, x_2,\ldots, x_8)P( y_1,  y_2,\ldots,  y_8)=P( z_1,  z_2,  \ldots,  z_8), \label{octicmprod}
\end{equation}
where $x_i,\, y_i,\;i=1,\,\ldots,\,8$, are independent variables and the values of $z_i, \;i=1,\,\ldots,\,8$, are given by bilinear forms in the variables $x_i,\, y_i$.

 If we now write
\begin{equation}
f(x_1,\, x_2,\ldots,\,x_8)={\rm det}(\left(P(x_1,\,x_2,\,\dots,\,x_8)\right),\label{octicfm}
\end{equation}
we immediately get the composition formula,
\begin{equation}
f(x_1, x_2,\ldots, x_8)f( y_1,  y_2,\ldots,  y_8)=f( z_1,  z_2,  \ldots,  z_8), \label{complawocticfm}
\end{equation}
where $f(x_1, x_2,\ldots, x_8)$ is an octic form in the variables $x_i,\;i=1,\,\ldots,\,8$, while $m,\,n,\,p, \,q,\,r,\,s$,  are arbitrary integer parameters and the values of $z_i,\,i=1,\,\ldots,\,8$, are given by
\begin{equation}
\begin{aligned} 
z_1 &= x_1y_1 - nx_2y_2 - qx_3y_3 + qnx_4y_4 \\ & \quad \;\;
- sx_5y_5 + snx_6y_6 + sqx_7y_7 - sqnx_8y_8, \\
z_2 &= x_1y_2 + x_2y_1 + mx_2y_2 - qx_3y_4 - qx_4y_3 - qmx_4y_4\\ & \quad \;\;
 - sx_5y_6 - sx_6y_5 - smx_6y_6 + sqx_7y_8 + sqx_8y_7 + sqmx_8y_8, \\
z_3 &= x_1y_3 - nx_2y_4 + x_3y_1 + px_3y_3 - nx_4y_2 - npx_4y_4\\ & \quad \;\;
 - sx_5y_7 + snx_6y_8 - sx_7y_5 - spx_7y_7 + snx_8y_6 + snpx_8y_8, \\
z_4 &= x_1y_4 + x_2y_3 + mx_2y_4 + x_3y_2 + px_3y_4 + x_4y_1\\ & \quad \;\;
 + mx_4y_2 + px_4y_3 + pmx_4y_4 - sx_5y_8 - sx_6y_7 - smx_6y_8\\ & \quad \;\;
 - sx_7y_6 - spx_7y_8 - sx_8y_5 - smx_8y_6 - spx_8y_7 - spmx_8y_8, \\
z_5 &= x_1y_5 - nx_2y_6 - qx_3y_7 + qnx_4y_8 + x_5y_1 + rx_5y_5\\ & \quad \;\;
 - nx_6y_2 - nrx_6y_6 - qx_7y_3 - qrx_7y_7 + qnx_8y_4 + nqrx_8y_8, \\
z_6 &= x_1y_6 + x_2y_5 + mx_2y_6 - qx_3y_8 - qx_4y_7 - qmx_4y_8\\ & \quad \;\;
 + x_5y_2 + rx_5y_6 + x_6y_1 + mx_6y_2 + rx_6y_5 + rmx_6y_6\\ & \quad \;\;
 - qx_7y_4 - qrx_7y_8 - qx_8y_3 - qmx_8y_4 - qrx_8y_7 - qrmx_8y_8, \\
z_7 &= x_1y_7 - nx_2y_8 + x_3y_5 + px_3y_7 - nx_4y_6 - npx_4y_8\\ & \quad \;\;
 + x_5y_3 + rx_5y_7 - nx_6y_4 - nrx_6y_8 + x_7y_1 + px_7y_3\\ & \quad \;\;
 + rx_7y_5 + rpx_7y_7 - nx_8y_2 - npx_8y_4 - nrx_8y_6 - rnpx_8y_8,\\
 z_8 &= x_1y_8 + x_2y_7 + mx_2y_8 + x_3y_6 + px_3y_8 + x_4y_5\\ & \quad \;\;
 + mx_4y_6 + px_4y_7 + pmx_4y_8 + x_5y_4 + rx_5y_8 + x_6y_3 \\ & \quad \;\;
+ mx_6y_4 + rx_6y_7 + rmx_6y_8 + x_7y_2 + px_7y_4 + rx_7y_6\\ & \quad \;\;
 + rpx_7y_8 + x_8y_1 + mx_8y_2 + px_8y_3 + pmx_8y_4 + rx_8y_5\\ & \quad \;\;
 + rmx_8y_6 + rpx_8y_7 + rpmx_8y_8.
\end{aligned}
\label{valzoctic}
\end{equation}

It has been verified using MAPLE that the octic form $f(x_1, x_2,\ldots, x_8)$ is irreducible for arbitrary values of the parameters  $m,\,n,\,p, \,q,\,r,\,s$.  It is also  not the norm of an algebraic integer.

\subsubsection{}\label{octicfms2} We will now consider the  diophantine equation
\begin{equation}
f(x_1,\, x_2,\ldots,\,x_8)=1, \label{octiceqn}
\end{equation}
where $f(x_1,\, x_2,\ldots,\,x_8)$ is the octic form defined by \eqref{octicfm}. As in Sections~\ref{quarticfms2} and \ref{sexticfms1}, it is readily established that the set of integer solutions of the octic diophantine equation \eqref{octiceqn} forms an abelian  group, the identity being $(1,\,0,\,0,\,0,\,0,\,0,\,0,\,0)$. 

We will now consider Eq.~\eqref{octiceqn} when $(m,\,n,\,p,\,q,\,r,\,s)=( 0,\,  -5,\,  0,\,  -3,$  $  0,\, -14)$. It is readily verified that a numerical solution of this equation is given by
\begin{equation}(x_1, x_2, x_3, x_4, x_5, x_6, x_7, x_8)=(4,\,  2,\, 2,\, 1,\, 14,\, 7,\, 8,\, 4).\label{knownsoloctic}
\end{equation}

If $(\alpha_{11},\,\alpha_{12},\,\ldots,\,\alpha_{18})$ is an arbitrary  integer solution of  our numerical octic equation such that $\alpha_{1i} > 0$ for each $i$, we may use the composition identity \eqref{complawocticfm} and the solutions \eqref{knownsoloctic} and $(\alpha_{11},\,\ldots,\,\alpha_{18})$ to obtain a new solution $(\alpha_{21},\,\ldots,\,\alpha_{28})$ where
\begin{equation}
\begin{aligned}
 \alpha_{21}&=4\alpha_1 + 10\alpha_2 + 6\alpha_3 + 15\alpha_4 + 196\alpha_5 + 490\alpha_6 + 336\alpha_7 + 840\alpha_8, \\
 \alpha_{22}&=2\alpha_1 + 4\alpha_2 + 3\alpha_3 + 6\alpha_4 + 98\alpha_5 + 196\alpha_6 + 168\alpha_7 + 336\alpha_8, \\
 \alpha_{23}&= 2\alpha_1 + 5\alpha_2 + 4\alpha_3 + 10\alpha_4 + 112\alpha_5 + 280\alpha_6 + 196\alpha_7 + 490\alpha_8\\
 \alpha_{24}&=\alpha_1 + 2\alpha_2 + 2\alpha_3 + 4\alpha_4 + 56\alpha_5 + 112\alpha_6 + 98\alpha_7 + 196\alpha_8,\\
\alpha_{25}&= 14\alpha_1 + 35\alpha_2 + 24\alpha_3 + 60\alpha_4 + 4\alpha_5 + 10\alpha_6 + 6\alpha_7 + 15\alpha_8,\\
\alpha_{26}&= 7\alpha_1 + 14\alpha_2 + 12\alpha_3 + 24\alpha_4 + 2\alpha_5 + 4\alpha_6 + 3\alpha_7 + 6\alpha_8,\\
\alpha_{27}& = 8\alpha_1 + 20\alpha_2 + 14\alpha_3 + 35\alpha_4 + 2\alpha_5 + 5\alpha_6 + 4\alpha_7 + 10\alpha_8,\\
\alpha_{28}&= 4\alpha_1 + 8\alpha_2 + 7\alpha_3 + 14\alpha_4 + \alpha_5 + 2\alpha_6 + 2\alpha_7 + 4\alpha_8.
\end{aligned}
\label{valalpha2octiceqn}
\end{equation}

Since $\alpha_{1i} > 0$ for each $i$, it immediately follows from \eqref{valalpha2octiceqn} that $\alpha_{21} > \alpha_{11}$ and hence this solution is distinct from the solution $(\alpha_{11},\,\alpha_{12},\,\ldots,\,\alpha_{18})$. Further, $\alpha_{2i} > 0$ for each $i$, and we may  therefore use the solutions  \eqref{knownsoloctic} and $(\alpha_{21},\,\alpha_{22},\,\ldots,\,\alpha_{28})$ to get a  new solution $(\alpha_{31},\,\ldots,\,\alpha_{38})$ such that $\alpha_{31} > \alpha_{21} > \alpha_{11}$, and $\alpha_{3i} > 0$ for each $i$,  and we may repeat the process any number of times to get an infinite sequence of integer solutions of our octic equation.

If we  take the initial  solution $(\alpha_{11},\,\alpha_{12},\,\ldots,\,\alpha_{18})$ as $(4,  2, 2, 1, 14, 7, 8, 4)$, the next three solutions of our octic equation obtained by the above process are as follows:
\[
\begin{array}{c}
(12285,\,  5460,\,  7092,\,  3152,\,  468,\,  208,\,  270,\,  120),\,\\
(578740,\,  258910,\,  334134,\,  149481,\,  729790,\,  326485,\,  421344,\,  188496),\,\\
(612075793,\,  273723336,\,  353382120,\,  158034240,\,  45691800,\,  20433600,\, \\
\quad \quad  26380172,\,  11797344).
\end{array}\] 

\section{Three-fold composition of forms and related diophantine equations}\label{threefold}
We will now consider forms $f(x_1,\,x_2,\,\ldots,\,x_n)$ such that their exists an identity,
\begin{multline}
 f_1(x_1,\,x_2,\,\ldots,\,x_n)f_1(y_1,\,y_2,\,\ldots,\,y_n)f_1(z_1,\,z_2,\,\ldots,\,z_n)\\
=f_1(w_1,\,w_2,\,\ldots,\,w_n),\quad \quad \label{threefoldcomplaw}
\end{multline}
where  the variables $z_i,\;i=1,\,2,\,\ldots,\,n$, are given by trilinear forms in the variables $x_i,\,y_i,\,z_i,\;i=1,\,2,\,\ldots,\,n$.  If the form $f(x_1,\,x_2,\,\ldots,\,x_n)$ admits a composition identity \eqref{complaw}, by applying the identity \eqref{complaw}  twice, we can readily obtain an identity of type \eqref{threefoldcomplaw}. To avoid such obvious cases,  we say that a form $f(x_1,\,x_2,\,\ldots,\,x_n)$ admits three-fold composition if there exists an identity  \eqref{threefoldcomplaw} and further, such an identity  cannot be derived from an identity of type \eqref{complaw}.

We will restrict our attention to those forms $f(x_1,\,x_2,\,\ldots,\,x_n)$ admitting three-fold composition whose coefficients are integers and further the coefficients of the trilinear forms, associated with the composition identity \eqref{threefoldcomplaw}, are also integers.
   
In Section~\ref{threefoldquadfms} we show that every binary  quadratic form admits three-fold composition, in  Section~\ref{threefoldhighdegfms} we describe a general method of constructing higher degree forms which admit three-fold composition, and in the next two subsections we apply the general method to obtain quartic and octic forms which admit three-fold composition. We also discuss related diophantine equations in the respective subsections.

\subsection{Quadratic forms}\label{threefoldquadfms}

\begin{theorem}\label{Th1} For arbitrary integers $a,\,b,\,c$, the   binary quadratic form $Q(x_1,\,x_2)=ax_1^2 + bx_1x_2 + cx_2^2$ admits a three-fold composition law,
\begin{equation}
Q(x_1,\,x_2)Q(y_1,\,y_2)Q(z_1,\,z_2)=Q(w_1,\,w_2), \label{3foldqdident}
\end{equation}
where $x_1, x_2, y_1, y_2, z_1, z_2$ are arbitrary variables and, if we write
\begin{equation}
\begin{aligned}
\phi_1(x_1, x_2, y_1, y_2, z_1, z_2)&=ax_1y_1z_1 + bx_1y_2z_1 + cx_1y_2z_2 \\
& \quad \quad - cx_2y_1z_2 + cx_2y_2z_1,\\
\phi_2(x_1, x_2, y_1, y_2, z_1, z_2)&= ax_1y_1z_2 - ax_1y_2z_1 + ax_2y_1z_1\\
& \quad \quad  + bx_2y_1z_2 + cx_2y_2z_2,
\end{aligned}
\end{equation}
the values of $w_1, w_2$ are given either by,
\begin{align}
&w_1&=\phi_1(x_1, x_2, y_1, y_2, z_1, z_2),\;\; w_2&=\phi_2(x_1, x_2, y_1, y_2, z_1, z_2), \label{valwfirst}\\
{\it or \;\; by,\;\;}&w_1&=\phi_1(y_1, y_2, z_1, z_2, x_1, x_2),\;\; w_2&=\phi_2(y_1, y_2, z_1, z_2, x_1, x_2), \label{valwsec}\\
{\it or\;\;  by,\;\;}&w_1&=\phi_1(z_1, z_2, x_1, x_2, y_1, y_2),\;\; w_2&=\phi_2(z_1, z_2, x_1, x_2, y_1, y_2).\label{valwtre}
\end{align}
\end{theorem}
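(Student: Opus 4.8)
The plan is to recognise $Q$ as a scalar multiple of a norm form and to realise the trilinear forms $\phi_1,\phi_2$ as the coordinates, in a basis $\{1,\theta\}$, of a product of three linear factors of which \emph{exactly one} is conjugated. First I would let $\theta$ be a root of $at^2+bt+c=0$, set $\theta'=-b/a-\theta$ (so that $\theta+\theta'=-b/a$ and $\theta\theta'=c/a$), and work formally in the quadratic algebra $\mathbb{Q}(a,b,c)[\theta]$ equipped with the conjugation $\theta\mapsto\theta'$ and the norm $N(u+v\theta)=(u+v\theta)(u+v\theta')$. The starting observation is the factorisation $Q(x_1,x_2)=a\,N(x_1-\theta x_2)$. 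Because the asserted identity \eqref{3foldqdident} is polynomial in $a,b,c$ and in the six variables, it is enough to prove it over the field $\mathbb{Q}(a,b,c)$ with $a\neq0$; the degenerate case $a=0$ and the integrality of the coefficients then follow automatically, since a polynomial identity that holds on a Zariski-dense set holds identically.

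The key step is the claim that, for the choice \eqref{valwfirst},
\[
\phi_1-\theta\phi_2=a\,(x_1-\theta x_2)(y_1-\theta' y_2)(z_1-\theta z_2).
\]
I would verify this by expanding the right-hand side, repeatedly using $a\theta^2=-b\theta-c$ to reduce to the basis $\{1,\theta\}$, and then reading off the coefficients of $1$ and of $-\theta$; a short computation shows that these are exactly the $\phi_1$ and $\phi_2$ written in the statement. The essential point --- and what I expect to be the real content of the proof --- is that the middle factor must carry $\theta'$ rather than $\theta$. If all three factors used $\theta$, the product $a(x_1-\theta x_2)(y_1-\theta y_2)(z_1-\theta z_2)$ would produce terms involving $bc/a$ and would fail to have integer coefficients; conjugating precisely one of the three factors is exactly what forces the offending $1/a$ contributions to cancel and leaves the integral trilinear forms $\phi_1,\phi_2$.

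Granting the displayed factorisation, the identity follows by taking norms. Since $N$ is multiplicative and $N(y_1-\theta'y_2)=N(y_1-\theta y_2)$, one gets
\[
N(\phi_1-\theta\phi_2)=a^2\,N(x_1-\theta x_2)\,N(y_1-\theta y_2)\,N(z_1-\theta z_2)=a^{-1}Q(x_1,x_2)Q(y_1,y_2)Q(z_1,z_2),
\]
and multiplying by $a$ and using $Q(\phi_1,\phi_2)=a\,N(\phi_1-\theta\phi_2)$ yields \eqref{3foldqdident} for the choice \eqref{valwfirst}. Finally, the two remaining choices \eqref{valwsec} and \eqref{valwtre} are obtained from the first by cyclically permuting the argument pairs $(x_1,x_2),(y_1,y_2),(z_1,z_2)$, that is, by conjugating a different single factor; since the product $Q(x_1,x_2)Q(y_1,y_2)Q(z_1,z_2)$ is symmetric in its three arguments, each relabelling leaves the norm unchanged and the same computation delivers the identity in those two cases as well.
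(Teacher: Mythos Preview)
Your argument for the identity is correct but follows a genuinely different route from the paper. The paper works with the trace-zero matrix
\[
A(x_1,x_2)=\begin{pmatrix} t x_1 & x_2 \\ b x_1 + c x_2 & -t x_1 \end{pmatrix},
\]
observes that while a product of two such matrices is not trace-zero, a product of three is, computes $A(x)A(y)A(z)=A(w_1,w_2)$ directly, takes determinants to get the identity with $t^2$ in place of $a$, and finally replaces $t^2$ by $a$ (legitimate because $t$ enters only as $t^2$). Your approach instead recognises $Q$ as $a$ times the norm on $\mathbb{Q}(a,b,c)[\theta]$ and exhibits $\phi_1-\theta\phi_2$ as $a(x_1-\theta x_2)(y_1-\theta' y_2)(z_1-\theta z_2)$, a product with exactly one conjugated factor. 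The two arguments are essentially dual under the usual correspondence between trace-zero $2\times 2$ matrices and ``pure'' elements of a quadratic algebra; the paper's version fits its matrix-morphology programme and feeds directly into the higher-degree constructions of later sections, whereas yours is more conceptual and explains \emph{why} conjugating precisely one factor is what produces integral coefficients and why cyclic permutation of the argument pairs yields the alternative choices \eqref{valwsec}, \eqref{valwtre}.

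One omission: by the paper's definition, ``admits three-fold composition'' includes the clause that the identity \eqref{3foldqdident} cannot be derived from any two-fold identity $Q(x_1,x_2)Q(y_1,y_2)=Q(z_1,z_2)$. The paper handles this in one line by taking $a,b,c$ so that $Q$ is negative definite, so that $Q(x)Q(y)>0$ can never equal $Q(z)$; you should add this (or an equivalent) observation to complete the proof of the theorem as stated.
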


\begin{proof} While it is readily verified that the identity \eqref{3foldqdident} is true when the values of $w_1,\,w_2$ are given by \eqref{valwfirst} or \eqref{valwsec} or \eqref{valwtre}, we will derive the identity \eqref{3foldqdident} by using  matrices.  We consider the matrix $A(x_1, x_2)$ defined by
\begin{equation}
A(x_1, x_2)=\begin{bmatrix}
tx_1 & x_2 \\ bx_1 + cx_2 & -tx_1 
\end{bmatrix}.
\label{3fold22matrix}
\end{equation}
The entries of     $A(x_1, x_2)$ are linear forms in the variables $x_1,\,x_2$ while $b, c$ and $t$ are arbitrary. Thus the matrix  $A(x_1, x_2)$ has a certain linear structure and in particular, we note that ${\rm tr}(A(x_1, x_2))=0$. 

We note the trace of the matrix product $A(x_1,\,x_2)A(y_1,\,y_2)$ is not zero and hence  the linear structure of the matrix $A(x_1,\,x_2)$ is not preserved on multiplying two such matrices but when we multiply three such matrices, the linear structure is preserved as is seen from  the identity,
\begin{equation}
A(x_1,\,x_2)A(y_1,\,y_2)A(z_1,\,z_2)=A(w_1,\,w_2),\label{prodA3fold}
\end{equation}
where $w_1$ and $w_2$ are trilinear forms in the variables $x_i,\,y_i,\,z_i$ defined by
\begin{equation}
\begin{aligned}
w_1&=t^2x_1y_1z_1 + bx_1y_2z_1 + cx_1y_2z_2 - cx_2y_1z_2 + cx_2y_2z_1,  \\
w_2& = t^2x_1y_1z_2 - t^2x_1y_2z_1 + t^2x_2y_1z_1 + bx_2y_1z_2 + cx_2y_2z_2.
\end{aligned}
\label{valw3fold}
\end{equation}
It follows from \eqref{prodA3fold} that 
\begin{equation}
{\rm det}(A(x_1,\,x_2)) \times {\rm det}(A(y_1,\,y_2)) \times {\rm det}(A(z_1,\,z_2))={\rm det}(A(w_1,\,w_2)),
\end{equation}
and hence we get the identity
\begin{multline}
(t^2x_1^2 + bx_1x_2 + cx_2^2)(t^2y_1^2 + by_1y_2 + cy_2^2)(t^2z_1^2 + bz_1z_2 + cz_2^2)\\
 = t^2w_1^2 + bw_1w_2 + cw_2^2, \label{identqd3fold}
\end{multline}
where the values of $w_1,\,w_2$ are given by \eqref{valw3fold}. 

Since $t$ is arbitrary and it occurs only as $t^2$ in the relations \eqref{valw3fold} and \eqref{identqd3fold}, we may simply replace $t^2$ by $a$, and we then get the identity \eqref{3foldqdident} where the values of $w_1,\,w_2$ are given by  \eqref{valwfirst}. We note that on permuting the three pairs of variables $(x_1,\,x_2),\,(y_1,\,y_2),\,(z_1,\,z_2)$ in the identity just obtained, while  the left-hand side of the identity  remains unchanged,   the values of $w_1,\,w_2$ get modified, and accordingly  we get the additional values of $w_1,\,w_2$, given by \eqref{valwsec} and \eqref{valwtre}, for which also the identity \eqref{3foldqdident} is satisfied.

To prove that Theorem~\ref{Th1} actually gives a three-fold composition identity for the  form $Q(x_1,\,x_2)$, we  will now show that there cannot exist a composition identity $Q(x_1,\,x_2)Q(y_1,\,y_2)=Q(z_1,\,z_2)$. Since $a, b, c$, are arbitrary integers, we may choose them such that  $Q(x_1,\,x_2)$ is a negative definite form. The product $Q(x_1,\,x_2)Q(y_1,\,y_2)$ is thus necessarily positive, and hence cannot be expressed by the form $Q(z_1,\,z_2)$. Thus there cannot exist an identity $Q(x_1,\,x_2)Q(y_1,\,y_2)=Q(z_1,\,z_2)$ for arbitrary $a,\,b,\,c$, and  hence the composition identity \eqref{3foldqdident} is indeed a three-fold composition identity.
\end{proof}

Since the left-hand side of the identity \eqref{3foldqdident} remains the same for the three pairs of values of $w_1,\,w_2$ given by \eqref{valwfirst}, \eqref{valwsec} and \eqref{valwtre}, it immediately follows that a solution of the diophantine chain,
\begin{equation}
Q(u_1,\,u_2)=Q(v_1,\,v_2)=Q(w_1,\,w_2),
\end{equation}
is given by  trilinear forms in the variables $x_i,\,y_i,\,z_i$ by
\begin{equation}
\begin{aligned}
u_1&=\phi_1(x_1, x_2, y_1, y_2, z_1, z_2),\quad u_2&=\phi_2(x_1, x_2, y_1, y_2, z_1, z_2), \\
v_1&=\phi_1(y_1, y_2, z_1, z_2, x_1, x_2),\quad v_2&=\phi_2(y_1, y_2, z_1, z_2, x_1, x_2), \\
w_1&=\phi_1(z_1, z_2, x_1, x_2, y_1, y_2),\quad w_2&=\phi_2(z_1, z_2, x_1, x_2, y_1, y_2).
\end{aligned}
\end{equation}

\subsection{Higher degree forms admitting three-fold composition}\label{threefoldhighdegfms}

We will now show how to construct higher degree forms that admit three-fold composition by first constructing a matrix whose linear structure is preserved when three such matrices are multiplied  but not when only two such matrices are multiplied.  In Section~\ref{threefoldquadfms} we have already constructed a $2 \times 2$ matrix whose linear structure is preserved  when three such matrices are multiplied. We will now use a method analogous to the one described in Section~\ref{mmhom}  
to  construct a $mn \times mn $ matrix whose linear structure is preserved  when three such matrices are multiplied  but  not when 
only two such matrices are multiplied.

We will begin with a  matrix $U$ whose entries are linear forms in variables $a_i,\;i=1,\,2,\,\ldots,\,a_h$, and a set of matrices $A_i,\;i=1,\,2,\,\ldots,\,A_h$, all of which have an identical linear structure. As in Section~\ref{mmhom} we will construct a matrix $P$ of order $mn \times mn$ by replacing the variables $a_i,\;i=1,\,2,\,\ldots,\,a_h$ in the entries of the matrix $U$ by matrices $A_i,\;i=1,\,2,\,\ldots,\,A_h$ respectively. 

If the linear structure either of the matrix $U$ or of the matrices $A_i,\;i=1,\,2,\,\ldots,\,A_h$, is preserved  when three matrices with that  linear structure are multiplied but not when only two such matrices are multiplied, and the linear structure of the  remaining matrix or matrices is preserved when two such matrices are multiplied, the resulting matrix $P$ will have a linear structure that is preserved  when three matrices having the linear structure of the matrix $P$ are multiplied  but not when only two such matrices are multiplied. Similarly if the linear structure of the matrix  $U$ as well as the matrices  $A_i$ is preserved  when three matrices with the same  linear structure are multiplied but not when  two such matrices are multiplied, then also the linear structure of the matrix $P$ will be  preserved  when such three matrices  are multiplied  but not when only two such matrices are multiplied.

This may be established exactly as in Section~\ref{mmhom} and we accordingly omit further details.  The only additional point to be noted here is that since the linear structure either of the matrix $U$ or of the matrices $A_1,\,A_2,\,\ldots,\,A_h$ is not preserved when two such matrices are multiplied, the linear structure of the matrix $P$ is also not preserved when we multiply only  two  matrices with such a linear structure. 

If the entries of the matrix $P$ are written in terms of linear forms in the independent variables $x_1,\,x_2,\,\ldots,\,x_s$, we may write $P=P(x_1,\,x_2,\,\ldots,\,x_s)$, and we may multiply three matrices with the same linear structure to get a relation,
\begin{multline}
P(x_1,\,x_2,\,\ldots,\,x_s) \times P(y_1,\,y_2,\,\ldots,\,y_s) \times P(z_1,\,z_2,\,\ldots,\,z_s)\\
=P(w_1,\,w_2,\,\ldots,\,w_s), \label{3foldprodmatrix}
\end{multline}
where $x_i,\,y_i,\,z_i,\;i=1,\,2,\,\ldots,\,s$, are independent variables and $w_i,\;i=1,\,2,\,\ldots,\,s$, are trilinear forms in the variables $x_i,\,y_i,\,z_i,\;i=1,\,2,\,\ldots,\,s$. 

We now get a form of degree $mn$ by writing
\[
f(x_1,\,x_2,\,\ldots,\,x_s)={\rm det}(P(x_1,\,x_2,\,\ldots,\,x_s)),\]
and it immediately follows from \eqref{3foldprodmatrix} that the form $f(x_1,\,x_2,\,\ldots,\,x_s)$ satisfies a composition identity of type \eqref{threefoldcomplaw}.

In each specific example that we construct by this method, we would need to prove that the form $f(x_1,\,x_2,\,\ldots,\,x_s)$ does not satisfy a composition identity \eqref{complaw} in order to establish that we have obtained a form that genuinely admits three-fold composition. 

We will use the above method  in Sections~\ref{3foldquarticfms} and \ref{3foldocticfms} to obtain quartic and octic forms,  admitting three-fold composition, in 4 and 8 variables respectively.

\subsection{Quartic forms and a related quartic diophantine equation}\label{3foldquarticfms}

We will begin with the matrices,
\begin{equation}
U=\begin{bmatrix} ta_1  &   a_2\\ pa_1 + qa_2  &   -ta_1 \end{bmatrix}, \quad A(x_1,\,x_2)=\begin{bmatrix} sx_1  &   x_2\\ mx_1 + nx_2  &   -sx_1 \end{bmatrix},
\label{valUA}
\end{equation}
and follow the procedure described in Section~\ref{threefoldhighdegfms} to construct a $4 \times 4$ matrix $P(x_1,\,x_2,\,x_3,\,x_4)$    whose linear structure will be preserved when three matrices with the same linear structure are multiplied.

We note that both the matrices $U$ and $A(x_1,\,x_2)$ are obtained by a suitable renaming of the variables in the matrix \eqref{3fold22matrix} and accordingly, they have a   linear structure which  will be preserved when three  matrices with the same linear structure are multiplied. We now construct the matrix $P(x_1,\,x_2,\,x_3,\,x_4)$ by simply replacing the variables $a_1,\,a_2$ in the matrix $U$ by the matrices $A(x_1,\,x_2)$ and $A(x_3,\,x_4)$ respectively, and we get,
\begin{equation}
P(x_1,\,x_2,\,x_3,\,x_4)=\begin{bmatrix} tA(x_1,\,x_2)  &   A(x_3,\,x_4) \\ pA(x_1,\,x_2) + qA(x_3,\,x_4)  &   -tA(x_1,\,x_2) \end{bmatrix}. 
\end{equation}

The linear structure of the matrix $P(x_1,\,x_2,\,x_3,\,x_4)$ is preserved when three such matrices are multiplied and, as discussed in Section~\ref{threefoldhighdegfms}, we  readily obtain the identity,
\begin{equation}
f(x_1,\,\ldots,\,x_4)f(y_1,\,\ldots,\,y_4)f(z_1,\,\ldots,\,z_4)=f(w_1,\,\ldots,\,w_4), \label{3foldcompidentquarticfm}
\end{equation}
where \begin{multline}
f(x_1,\,\ldots,\,x_4)=s^4t^4x_1^4 + 2s^2t^4mx_1^3x_2 + 2s^4t^2px_1^3x_3\\
 + s^2t^2mpx_1^3x_4 + (m^2 + 2s^2n)t^4x_1^2x_2^2 + 3s^2t^2mpx_1^2x_2x_3\\
 + (m^2 + 2s^2n)t^2px_1^2x_2x_4 + (p^2 + 2t^2q)s^4x_1^2x_3^2 \\
+ (p^2 + 2t^2q)s^2mx_1^2x_3x_4 + (s^2np^2 + t^2m^2q - 2s^2t^2nq)x_1^2x_4^2\\
 + 2t^4mnx_1x_2^3 + (m^2 + 2s^2n)t^2px_1x_2^2x_3 + 3t^2mnpx_1x_2^2x_4\\
 + (p^2 + 2t^2q)s^2mx_1x_2x_3^2 + (m^2p^2 + 8s^2t^2nq)x_1x_2x_3x_4\\
 + (p^2 + 2t^2q)mnx_1x_2x_4^2 + 2s^4pqx_1x_3^3 + 3s^2mpqx_1x_3^2x_4\\
 + (m^2 + 2s^2n)pqx_1x_3x_4^2 + mnpqx_1x_4^3 + t^4n^2x_2^4
+ t^2mnpx_2^3x_3\\
 + 2t^2n^2px_2^3x_4 + (s^2np^2 + t^2m^2q - 2s^2t^2nq)x_2^2x_3^2\\
 + (p^2 + 2t^2q)mnx_2^2x_3x_4 + (p^2 + 2t^2q)n^2x_2^2x_4^2 + s^2mpqx_2x_3^3\\
 + (m^2 + 2s^2n)pqx_2x_3^2x_4 + 3mnpqx_2x_3x_4^2 + 2n^2pqx_2x_4^3\\
 + s^4q^2x_3^4 + 2s^2mq^2x_3^3x_4 + (m^2 + 2s^2n)q^2x_3^2x_4^2 + 2mnq^2x_3x_4^3 + n^2q^2x_4^4, \label{3foldquarticfm}
\end{multline}
and $x_i,\,y_i,\,z_i,\;i=1,\,\ldots,\,4$, are independent variables while the values of $w_i,\;i=1,\,\ldots,\,4$, are given by
\begin{equation}
\begin{aligned}
w_1   & =  s^2t^2x_1y_1z_1 + mt^2x_1y_2z_1 + nt^2x_1y_2z_2 - nt^2x_2y_1z_2\\ & \quad \;\;
 + nt^2x_2y_2z_1 + ps^2x_1y_3z_1 + qs^2x_1y_3z_3 - qs^2x_3y_1z_3 \\ & \quad \;\;
+ qs^2x_3y_3z_1 + mpx_1y_4z_1 + mqx_1y_4z_3 - mqx_3y_2z_3 \\ & \quad \;\;
+ mqx_3y_4z_1+ npx_1y_4z_2 - npx_2y_3z_2 + npx_2y_4z_1 \\ & \quad \;\;
+ nqx_1y_4z_4 - nqx_2y_3z_4 + nqx_2y_4z_3 - nqx_3y_2z_4 \\ & \quad \;\;
+ nqx_3y_4z_2 + nqx_4y_1z_4 - nqx_4y_2z_3- nqx_4y_3z_2\\ & \quad \;\; + nqx_4y_4z_1,\\
w_2   & =  s^2t^2x_1y_1z_2 - s^2t^2x_1y_2z_1 + s^2t^2x_2y_1z_1 + mt^2x_2y_1z_2\\ & \quad \;\; 
+ nt^2x_2y_2z_2 + ps^2x_1y_3z_2 - ps^2x_1y_4z_1 + ps^2x_2y_3z_1 \\ & \quad \;\;
+ qs^2x_1y_3z_4 - qs^2x_1y_4z_3 + qs^2x_2y_3z_3 - qs^2x_3y_1z_4 \\ & \quad \;\;
+ qs^2x_3y_2z_3 + qs^2x_3y_3z_2 - qs^2x_3y_4z_1 - qs^2x_4y_1z_3 \\ & \quad \;\;
+ qs^2x_4y_3z_1 + mpx_2y_3z_2 + mqx_2y_3z_4 - mqx_4y_1z_4 \\ & \quad \;\;
+ mqx_4y_3z_2 + npx_2y_4z_2 + nqx_2y_4z_4 - nqx_4y_2z_4 \\ & \quad \;\;+ nqx_4y_4z_2, \\
w_3   & =  s^2t^2x_1y_1z_3 - s^2t^2x_1y_3z_1 + s^2t^2x_3y_1z_1 + mt^2x_1y_2z_3\\ & \quad \;\;
 - mt^2x_1y_4z_1 + mt^2x_3y_2z_1 + nt^2x_1y_2z_4 - nt^2x_1y_4z_2 \\ & \quad \;\;
- nt^2x_2y_1z_4 + nt^2x_2y_2z_3 + nt^2x_2y_3z_2 - nt^2x_2y_4z_1\\ & \quad \;\;
 + nt^2x_3y_2z_2 - nt^2x_4y_1z_2 + nt^2x_4y_2z_1 + ps^2x_3y_1z_3 \\ & \quad \;\;
+ qs^2x_3y_3z_3 + mpx_3y_2z_3 + mqx_3y_4z_3 + npx_3y_2z_4 \\ & \quad \;\;
- npx_4y_1z_4 + npx_4y_2z_3 + nqx_3y_4z_4 - nqx_4y_3z_4 \\ & \quad \;\;+ nqx_4y_4z_3,\\
 w_4   & =  s^2t^2x_1y_1z_4 - s^2t^2x_1y_2z_3 - s^2t^2x_1y_3z_2 + s^2t^2x_1y_4z_1\\ & \quad \;\;
 + s^2t^2x_2y_1z_3 - s^2t^2x_2y_3z_1 + s^2t^2x_3y_1z_2 - s^2t^2x_3y_2z_1 \\ & \quad \;\;
+ s^2t^2x_4y_1z_1 + mt^2x_2y_1z_4 - mt^2x_2y_3z_2 + mt^2x_4y_1z_2 \\ & \quad \;\;
+ nt^2x_2y_2z_4 - nt^2x_2y_4z_2 + nt^2x_4y_2z_2 + ps^2x_3y_1z_4 \\ & \quad \;\;
- ps^2x_3y_2z_3 + ps^2x_4y_1z_3 + qs^2x_3y_3z_4 - qs^2x_3y_4z_3 \\ & \quad \;\;
+ qs^2x_4y_3z_3 + mpx_4y_1z_4 + mqx_4y_3z_4 + npx_4y_2z_4 \\ & \quad \;\;+ nqx_4y_4z_4.
\end{aligned}
\label{3foldvalwquarticfm}
\end{equation}

We will now show that the form $f(x_1,\,\ldots,\,x_4)$ defined by \eqref{3foldquarticfm} does not satisfy any composition identity of type \eqref{complaw}. Assuming such an identity exists, it would be valid for all values of the integer parameters $m, \, n,\,p,\,q,\, s,\,t$. We now choose 
$(m,\,n,\,p,\,q,\,s,\,t)=(0,\,1,\,0,\,2,\,0,\,0)$ when \eqref{complaw} reduces to $(4x_4^4)(4y_4^4)=4z_4^4$ which is false  since the value of $z_4$ must be  given by a bilinear form  with integer coefficients. It follows that the form $f(x_1,\,\ldots,\,x_4)$ does not satisfy any identity of type \eqref{complaw}. Hence it is indeed a form admitting three-fold composition. 

We will now consider the quartic diophantine equation $f(x_1,\,\ldots,\,x_4)=1$ where $f(x_1,\,\ldots,\,x_4)$ is defined by \eqref{3foldquarticfm} and 
$ (m,\,n,\,p,\,q,\,s,\,t)=(-1,\,-4,\,1,$ $-1,\,1,\,1)$, that is, the equation,
\begin{multline}
x_1^4 - 2x_1^3x_2 + 2x_1^3x_3 - x_1^3x_4 - 7x_1^2x_2^2 - 3x_1^2x_2x_3\\
 - 7x_1^2x_2x_4 - x_1^2x_3^2 + x_1^2x_3x_4 - 13x_1^2x_4^2 + 8x_1x_2^3\\
 - 7x_1x_2^2x_3 + 12x_1x_2^2x_4 + x_1x_2x_3^2 + 33x_1x_2x_3x_4\\
 - 4x_1x_2x_4^2 - 2x_1x_3^3 + 3x_1x_3^2x_4 + 7x_1x_3x_4^2 - 4x_1x_4^3\\
 + 16x_2^4 + 4x_2^3x_3 + 32x_2^3x_4 - 13x_2^2x_3^2 - 4x_2^2x_3x_4 \\
- 16x_2^2x_4^2 + x_2x_3^3 + 7x_2x_3^2x_4 - 12x_2x_3x_4^2 - 32x_2x_4^3 \\
+ x_3^4 - 2x_3^3x_4 - 7x_3^2x_4^2 + 8x_3x_4^3 + 16x_4^4=1. \label{3foldquarticeqn}
\end{multline}

It is readily verified that two numerical solutions of Eq.~\eqref{3foldquarticeqn} are  given by
\[
(x_1,\,x_2,\,x_3,\,x_4) =(1,\,0,\,0,\,0),\quad (x_1,\,x_2,\,x_3,\,x_4) =(21,\, 8, \,33,\,  13).\] 

 If $(\alpha_{11},\,\alpha_{12},\,\alpha_{13},\,\alpha_{14})$ is an arbitrary  integer solution of Eq.~\eqref{3foldquarticeqn} such that $\alpha_{1i} > 0$ for each $i$, in the identity \eqref{3foldcompidentquarticfm} we take,
\begin{equation*}
\begin{aligned}
(x_1,\,x_2,\,x_3,\,x_4)&=(\alpha_{11},\,\alpha_{12},\,\alpha_{13},\,\alpha_{14}),\;\;(y_1,\,y_2,\,y_3,\,y_4)&=(1,\,0,\,0,\,0),\\
(z_1,\,z_2,\,z_3,\,z_4)&=(21,\, 8, \,33,\,  13), &
\end{aligned}
\end{equation*}
and, on using the relations \eqref{3foldvalwquarticfm}, we obtain a  new solution $(\alpha_{21},\,\alpha_{22},$ $\alpha_{23},\,\alpha_{24})$ of Eq.~\eqref{3foldquarticeqn} where
\begin{equation}
\begin{aligned}
 \alpha_{21}&=  21\alpha_{11 } + 32\alpha_{12 } + 33\alpha_{13 } + 52\alpha_{14}, \\
 \alpha_{22}&=  8\alpha_{11 } + 13\alpha_{12 } + 13\alpha_{13 } + 20\alpha_{14}, \\
 \alpha_{23}&=33\alpha_{11 } + 52\alpha_{12 } + 54\alpha_{13 } + 84\alpha_{14}, \\
 \alpha_{24}&=13\alpha_{11 } + 20\alpha_{12 } + 21\alpha_{13 } + 33\alpha_{14}.
\end{aligned}
\end{equation}

Since $\alpha_{1i} > 0$ for each $i$, it immediately follows that $\alpha_{21} > \alpha_{11}$ and hence this solution is distinct from the solution $(\alpha_{11},\,\alpha_{12},\,\alpha_{13},\,\alpha_{14})$. Further, $\alpha_{2i} > 0$ for each $i$, and  therefore  we may now take $(x_1,\,x_2,\,x_3,\,x_4)=(\alpha_{21},\,\alpha_{22},\,\alpha_{23},\,\alpha_{24})$ and repeat  the above process to get a new solution $(\alpha_{31},\,\alpha_{32},\,\alpha_{33},\,\alpha_{34})$ such that $\alpha_{31} > \alpha_{21} > \alpha_{11}$, and $\alpha_{3i} > 0$ for each $i$,  and, in fact,  we may repeat the process any number of times to get an infinite sequence of integer solutions of Eq.~\eqref{3foldquarticeqn}.

If we  take the initial  solution $(\alpha_{11},\,\alpha_{12},\,\alpha_{13},\,\alpha_{14})$ as $(21,\, 8, \,33,\,  13)$, the next three solutions of Eq.~\eqref{3foldquarticeqn} obtained by the above process are as follows:
\[
\begin{array}{c}
( 2462, \, 961,\, 3983, \, 1555),\quad ( 294753, \, 115068,\, 476920,\, 186184),\\
( 35291917,\,  13777548,\, 57103521,\,  22292541).
\end{array}\]

\subsection{Octic forms and a related octic diophantine equation}\label{3foldocticfms}

We will now construct an octonary octic form admitting three-fold composition beginning  with the matrix $U$  defined by
\begin{equation}
U=\begin{bmatrix}
a_1 & a_2 \\ -sa_2 & a_1+ra_2\end{bmatrix}. \label{3foldvalUoctic}
\end{equation}
This matrix has been obtained by a suitable renaming of the variables in the matrix $U$ defined by \eqref{valU} and its linear structure is accordingly preserved when two such matrices are multiplied.

We will now construct an $8 \times 8$  matrix $P$ by replacing the variables $a_1$ and $a_2$ in \eqref{3foldvalUoctic}  by two $4 \times 4$ matrices  $A_1$ and $A_2$ with  an identical linear structure that is preserved under when three such matrices are multiplied.

To construct a $4 \times 4$ matrix whose linear structure  is preserved  when three such matrices are multiplied, we begin with the following two matrices:
\begin{equation}
M_1=\begin{bmatrix}
a_1 & a_2 \\ -qa_2 & a_1+pa_2\end{bmatrix}, \quad M_2(x_1, x_2)=\begin{bmatrix}
tx_1 & x_2 \\ mx_1 + nx_2 & -tx_1
\end{bmatrix}.  \label{3foldvalM12}
\end{equation}

The matrix $M_1$  has been obtained by simply renaming the matrix defined by \eqref{valU} while the matrix $M_2$ has been obtained by suitably renaming the parameters in the  matrix defined by \eqref{3fold22matrix}, and accordingly the linear structure of the matrix $M_1$ is preserved when two such matrices are multiplied while that of  
the matrix $M_2(x_1, x_2)$ is preserved when three such matrices are multiplied. 

Now on replacing the variables $a_1,\,a_2$ in the matrix $M_1$ by matrices $M_2(x_1, x_2)$ and $M_2(x_3, x_4)$, we get the following  $4 \times 4$ matrix $A(x_1, x_2, x_3, x_4)$ whose linear structure is preserved when such three matrices  are multiplied: 

\scriptsize
\begin{equation*}
A(x_1,\,x_2,\,x_3,\,x_4)=\begin{bmatrix}
t x_1 &x_2 &t x_3 &x_4\\ mx_1 +nx_2 &-t x_1 &m x_3+n x_4 &-t x_3\\ -q t x_3 &-q x_4 &t x_1+p t x_3 &x_2+p x_4\\ -q (m x_3+n x_4) &q t x_3 &mx_1 +nx_2 +p (m x_3+n x_4) &-t x_1-p t x_3
\end{bmatrix}. 
\end{equation*}
\normalsize

Now on replacing the variables $a_1,\,a_2$ in the matrix $U$ by matrices $A(x_1,\,x_2,\,x_3,\,x_4)$ and $A(x_5, x_6,\,x_7,\,x_8)$ respectively, we get an $8 \times 8$ matrix $P(x_1,\ldots, x_8)$ whose entries are linear forms in the variables $x_1, \ldots,\, x_8$ and whose  linear structure is preserved when three matrices
with  the same linear are multiplied. The matrix $P(x_1,\ldots, x_8)$ may be written as 
\begin{equation*}
P(x_1,\ldots,x_8)=\begin{bmatrix}
A(x_1,\ldots,x_4) & A(x_5,\ldots,x_8) \\ -sA(x_5,\ldots,x_8) & A(x_1,\ldots,x_4)+rA(x_5,\ldots,x_8)
\end{bmatrix}. \label{3foldmm88}
\end{equation*}

Since the linear structure of the matrix $P(x_1,\,\ldots,\, x_8)$ is preserved when we multiply three such matrices, 
it follows that if we write, 
\begin{equation}
f(x_1,\,\ldots,\,x_8)={\rm det}(P(x_1,\,\ldots,\,x_8)), \label{threefoldocticfm} 
\end{equation}
we have the identity,
\begin{equation}
f(x_1,\,\dots,\,x_8)f(y_1,\,\ldots,\,y_8)f(z_1,\,\ldots,\,z_8)=f(w_1,\,\ldots,\,w_8), \label{3foldidentocticfm}
\end{equation}
where the values of $w_i,\;i=1,\,\ldots,\,8$, are given by trilinear forms in the variables $x_i,\,y_i,\,z_i,\;i=1,\ldots,\,8$.
The values of $w_i$ are too cumbersome to write and are accordingly not being given explicitly.

We will now show that the form $f(x_1,\,\ldots,\,x_8)$  does not satisfy any composition identity of type \eqref{complaw}. Assuming such an identity exists, it would be valid for all values of the  parameters $m, \, n,\,p,\,q,\,r,\, s,\,t$. We choose 
$(m,\,n,\,p,\,q,\,r,\,s,\,t)=(0,\,2,\,0,\,0,\,0,\,0)$ when \eqref{complaw} reduces to $(16x_2^8)(16y_2^8)=16z_2^8$ which is false since the value of $z_2$ must be  given by a bilinear form  with integer coefficients. It follows that the form $f(x_1,\,\ldots,\,x_8)$ does not satisfy any identity of type \eqref{complaw}. Hence it is indeed a form admitting three-fold composition.

We now consider the octic  diophantine equation
\begin{equation}
f(x_1,\,\dots,\,x_8)=1. \label{3foldocticeqn}
\end{equation}
where $f(x_1,\,\dots,\,x_8)$ is defined by \eqref{threefoldocticfm} and 
$(m,\,n,\,p,\,q,\,r,\,s,\,t)=(3,\,-1,$ $ 0,\, -3,\,0,\,-14,\,1)$, so that the matrix 
$P(x_1,\,\ldots,\,x_8)$ may be written as,
\scriptsize
\begin{equation*}
P=\begin{bmatrix} x_1  &  x_2  &  x_3  &  x_4  &  x_5  &  x_6  &  x_7  &  x_8 \\  3 x_1-x_2  &  -x_1  &  3 x_3-x_4  &  -x_3  &  3 x_5-x_6  &  -x_5  &  3 x_7-x_8  &  -x_7 \\  3 x_3  &  3 x_4  &  x_1  &  x_2  &  3 x_7  &  3 x_8  &  x_5  &  x_6 \\  9 x_3-3 x_4  &  -3 x_3  &  3 x_1-x_2  &  -x_1  &  9 x_7-3 x_8  &  -3 x_7  &  3 x_5-x_6  &  -x_5 \\  14 x_5  &  14 x_6  &  14 x_7  &  14 x_8  &  x_1  &  x_2  &  x_3  &  x_4 \\  42 x_5-14 x_6  &  -14 x_5  &  42 x_7-14 x_8  &  -14 x_7  &  3 x_1-x_2  &  -x_1  &  3 x_3-x_4  &  -x_3 \\  42 x_7  &  42 x_8  &  14 x_5  &  14 x_6  &  3 x_3  &  3 x_4  &  x_1  &  x_2 \\  126 x_7-42 x_8  &  -42 x_7  &  42 x_5-14 x_6  &  -14 x_5  &  9 x_3-3 x_4  &  -3 x_3  &  3 x_1-x_2  &  -x_1
\end{bmatrix},
\label{3foldmm88ex1}
\end{equation*}
 \normalsize 
and now the octic equation is given by
\begin{equation}
{\rm det}(P)=1. \label{3foldocticeqnex1}
\end{equation}
We will show that Eq.~\eqref{3foldocticeqnex1} has infinitely many solutions in positive integers.

It follows from \eqref{3foldidentocticfm} that any three solutions of Eq.~\eqref{3foldocticeqnex1} may be combined     to yield a new solution. Now it is readily verified that two numerical solutions of Eq.~\eqref{3foldocticeqnex1} are $(1,0,0,$ $ 0,0,0,0,0)$ and $(2, 6,  1,  3,  7, 21,  4, 12)$. If $(\alpha_{11},\ldots,\,\alpha_{18})$ is an arbitrary solution of Eq.~\eqref{3foldocticeqnex1} such that $\alpha_{1i} > 0$ for each $i$, we may combine the three solutions $(\alpha_{11},\ldots,\,\alpha_{18})$, $(1,0,0,0,0,0,0,0)$ and $(2, 6,  1,  3,  7, 21,  4, 12)$, taken in that order, to get a new solution $(\alpha_{21},\ldots,\alpha_{28})$ which is given by
\begin{equation*}
\begin{aligned}
\alpha_{21}  & = 2\alpha_{11}  + 6\alpha_{12}  + 3\alpha_{13}  + 9\alpha_{14}  + 98\alpha_{15}  + 294\alpha_{16}  + 168\alpha_{17}  + 504\alpha_{18},\\
 \alpha_{22}  & = 6\alpha_{11}  + 20\alpha_{12}  + 9\alpha_{13}  + 30\alpha_{14}  + 294\alpha_{15}  + 980\alpha_{16}  + 504\alpha_{17}  + 1680\alpha_{18}, \\
\alpha_{23}  & = \alpha_{11}  + 3\alpha_{12}  + 2\alpha_{13}  + 6\alpha_{14}  + 56\alpha_{15}  + 168\alpha_{16}  + 98\alpha_{17}  + 294\alpha_{18}, \\
\alpha_{24}  & = 3\alpha_{11}  + 10\alpha_{12}  + 6\alpha_{13}  + 20\alpha_{14}  + 168\alpha_{15}  + 560\alpha_{16}  + 294\alpha_{17}  + 980\alpha_{18},\\
 \alpha_{25 }  & = 7\alpha_{11}  + 21\alpha_{12}  + 12\alpha_{13}  + 36\alpha_{14}  + 2\alpha_{15}  + 6\alpha_{16}  + 3\alpha_{17}  + 9\alpha_{18},\\
 \alpha_{26}  & = 21\alpha_{11}  + 70\alpha_{12}  + 36\alpha_{13}  + 120\alpha_{14}  + 6\alpha_{15}  + 20\alpha_{16}  + 9\alpha_{17}  + 30\alpha_{18},\\
 \alpha_{27}  & = 4\alpha_{11}  + 12\alpha_{12}  + 7\alpha_{13}  + 21\alpha_{14}  + \alpha_{15}  + 3\alpha_{16}  + 2\alpha_{17}  + 6\alpha_{18}, \\
\alpha_{28}  & = 12\alpha_{11}  + 40\alpha_{12}  + 21\alpha_{13}  + 70\alpha_{14}  + 3\alpha_{15}  + 10\alpha_{16}  + 6\alpha_{17}  + 20\alpha_{18}.
\end{aligned}
\label{sol23foldocticeqnex1}
\end{equation*}

Since $\alpha_{1i} > 0$ for each $i$, it immediately follows that $\alpha_{21} > \alpha_{11}$ and hence this solution is distinct from the solution $(\alpha_{11},\,\ldots,\,\alpha_{18})$. Further, $\alpha_{2i} > 0$ for each $i$, and we may  therefore  repeat the above process to get a new solution $(\alpha_{31},\,\ldots,\,\alpha_{38})$ such that $\alpha_{31} > \alpha_{21} > \alpha_{11}$, and $\alpha_{3i} > 0$ for each $i$,  and, in fact,  we may repeat the process any number of times to get an infinite sequence of integer solutions of Eq.~\eqref{3foldocticeqnex1}.

If we  take the initial  solution $(\alpha_{11},\,\dots,\,\alpha_{18})$ as $(2, 6,  1,  3,  7, 21,  4, 12)$, the next two solutions of Eq.~\eqref{3foldocticeqnex1} obtained by the above process are as follows:
\[
\begin{array}{c}
(13650,\,  45045,\,  7880,\,  26004,\,  520,\, 1716,\,  300,\,  990),\,\\
(1660070,\,  5482800,\, 958437,\,  3165480,\,  2093345,\,  6913800,\,  1208592,\, 3991680),\\
( 4520236757,\,  14929326951,\, 2609759880,\,  8619450840,\,\\
  337438200,\,  1114482600,\,  194820028,\,  643446804).
\end{array}\]
 
\section{Concluding remarks}
The composable forms constructed in Sections~\ref{highdegfms} and \ref{threefold} above are  illustrative examples, and many more forms admitting composition may be obtained in a similar manner. In fact, the  general methods given in this paper  may be used to construct forms of arbitrarily high degree admitting  the composition identity \eqref{complaw}  or the three-fold composition identity \eqref{f3compform}, and such that the forms constructed as well as the bilinear/ trilinear forms related to these identities have only integer coefficients. 

It would be of interest to find matrices with a linear structure that is preserved when $m$ matrices with the same linear structure are multiplied but not preserved when fewer than $m$ such matrices are multiplied where  $m > 3$. This would lead to a natural generalization of the idea of three-fold composition of forms to  $m$-fold composition of forms. 

As regards the diophantine equations, we have shown that the infinitely many solutions of the quartic diophantine equation \eqref{deff2sextic} cannot be obtained by  any parametric solution. It would be of interest to establish that the infinitely many solutions of the other quartic and octic equations solved in Sections~\ref{highdegfms} and \ref{threefold} also cannot be obtained by parametric solutions. 

The examples of diophantine equations given  in Sections~\ref{highdegfms} and \ref{threefold} are also only illustrative in nature. It is clear that many more similar examples can readily be constructed. In fact, using the methods described in this paper, it may  be possible to construct diophantine equations $f(x_i)=1$ with infinitely many integer solutions where $f(x_i)$ is a form of degree $n$ in $n$ variables and $n >  8$. 

\begin{center}
\Large
Acknowledgments
\end{center}

I wish to  thank the Harish-Chandra Research Institute, Prayagraj for providing me with all necessary facilities that have helped me to pursue my research work in mathematics.

\medskip

\noindent Postal Address: Ajai Choudhry, 
\newline \hspace{1.05 in}
13/4 A Clay Square,
\newline \hspace{1.05 in} Lucknow - 226001, INDIA.
\newline \noindent  E-mail: ajaic203@yahoo.com
\end{document}